\definecolor{dblue}{rgb}{0,0,0.70}
\newtheorem{theorem}{Theorem}[section]	
\newtheorem*{theorem*}{Theorem}
\newaliascnt{lemma}{theorem}
\newtheorem{lemma}[lemma]{Lemma}
\newtheorem*{lemma*}{Lemma}
\newaliascnt{proposition}{theorem}
\newtheorem{proposition}[proposition]{Proposition}
\newaliascnt{corollary}{theorem}
\newtheorem{corollary}[corollary]{Corollary}
\theoremstyle{remark}
\newaliascnt{remark}{theorem}
\newtheorem{remark}[remark]{Remark}
\newaliascnt{question}{theorem}
\newtheorem{question}[question]{Question}
\newaliascnt{definition}{theorem}
\newtheorem{definition}[definition]{Definition}
\newaliascnt{example}{theorem}
\renewcommand{\restriction}{\mathbin\upharpoonright}
\newcommand{\axiom}[1]{\mathsf{#1}} 
\newcommand{\ZFC}{\axiom{ZFC}}
\newcommand{\AD}{\axiom{AD}}
\newcommand{\DC}{\axiom{DC}}
\newcommand{\ZF}{\axiom{ZF}}
\newcommand{\Ord}{\mathrm{Ord}}
\newcommand{\GCH}{\axiom{GCH}}
\newcommand{\HS}{\axiom{HS}}
\DeclareMathOperator{\cf}{cf}
\DeclareMathOperator{\dom}{dom}
\DeclareMathOperator{\supp}{supp}
\DeclareMathOperator{\sym}{sym}
\DeclareMathOperator{\fix}{fix}
\DeclareMathOperator{\id}{id}
\DeclareMathOperator{\aut}{Aut}
\DeclareMathOperator{\Col}{Col}
\DeclareMathOperator{\Add}{Add}
\DeclareMathOperator{\otp}{otp}
\newcommand{\forces}{\mathrel{\Vdash}}
\newcommand{\PP}{\mathbb{P}}
\newcommand{\power}{\mathcal{P}}
\newcommand{\QQ}{\mathbb{Q}}
\newcommand{\cF}{\mathcal F}
\newcommand{\cU}{\mathcal U}
\newcommand{\sF}{\mathscr F}
\newcommand{\cS}{\mathcal S}
\newcommand{\sG}{\mathscr G}
\newcommand{\tup}[1]{\langle#1\rangle}
\newcommand{\middd}{\mathrel{}\middle|\mathrel{}}
\author{Yair Hayut}
\author{Asaf Karagila}
\thanks{The second author was partially supported by the Austrian Science Foundation FWF, grant I~3081-N35}
\address[Yair Hayut]{School of Mathematical Sciences.
Tel Aviv University.
Tel Aviv 69978,
Israel}
\email[Yair Hayut]{yair.hayut@mail.huji.ac.il}
\address[Asaf Karagila]{DMG/Algebra, TU Wien.
Wiedner Hauptstrasse 8-10/104,
1040 Wien,
Austria}
\curraddr[Asaf Karagila]{School of Mathematics,
University of East Anglia.
Norwich, NR4~7TJ, United Kingdom
}
\email[Asaf Karagila]{karagila@math.huji.ac.il}
\urladdr[Asaf Karagila]{http://karagila.org}
\date{\today}
\subjclass[2010]{Primary 03E25; Secondary 03E55, 03E35}
\keywords{uniform ultrafilters, axiom of choice, measurable cardinals, strongly compact cardinals}
\title{Spectra of uniformity}
\begin{document}
\begin{abstract}
We study some limitations and possible occurrences of uniform ultrafilters on ordinals without the axiom of choice. We prove an Easton-like theorem about the possible spectrum of successors of regular cardinals which carry uniform ultrafilters; we also show that this spectrum is not necessarily closed.
\end{abstract}
\maketitle    
\section{Introduction}

The existence of uniform ultrafilters (ultrafilters where all sets have the same cardinality) on an infinite cardinal $\lambda$ is one of the basic consequences of the axiom of choice: simply extend the co-ideal of $[\lambda]^{<\lambda}$ to an ultrafilter using Zorn's Lemma. In fact, one can easily see that this is a consequence of the Ultrafilter Lemma which is known to be strictly weaker than the axiom of choice. In other words, let us denote by $\cU$ the class $\{\lambda\mid\lambda\text{ is an infinite cardinal which carries a uniform ultrafilter}\}$, then $\ZFC$ proves that $\cU$ is equal to the class of all infinite cardinals.

Working only in $\ZF$, the situation can be quite different. It is known to be consistent with $\ZF$ that $\cU$ is empty (see section~\ref{subsection:filters-prelim} for details). In this paper we investigate some of the basic properties of $\cU$ in the theory $\ZF$ + ``Every successor cardinal is regular''. 

The main theorem of this paper is an Easton-like theorem for the restriction of $\mathcal{U}$ to the successors of the regular cardinals. We will also show that $\cU$ is not necessarily closed. Specifically we show that it is consistent that there is no uniform ultrafilter on $\aleph_\omega$, but there are uniform ultrafilters on $\aleph_n$ for all $0<n<\omega$; as well as the opposite scenario where $\aleph_0$ and $\aleph_\omega$ carry uniform ultrafilters, but no $\aleph_n$ for $0<n<\omega$ does.

The construction used for the proof of the Easton-like theorem is somewhat limited, and generally requires very large cardinals to be present in the ground model.  Whether or not these assumptions are entirely necessary remains open. In section 2, we give a brief exposition on symmetric extensions, as well as an historical overview of related results. Sections 3 and 4 are devoted to generalizing previous constructions by Feferman and Jech. Our main theorem, as well as the mentioned consistency results, are proved in section 5. We finish the paper with open questions in section 6.
\subsection*{Acknowledgments}
The authors would like to thank the anonymous referee for their thorough reviewing efforts, as well as for suggesting \autoref{question:referee}, which helped to clarify and improve this paper.
\section{Basic notions}
\subsection{Symmetric extensions}
One of the common methods of constructing models where the axiom of choice fails is using \textit{symmetric extensions}. This is an extension of the method of forcing, and it is generally necessary, since forcing over a model of $\ZFC$ produces a model of $\ZFC$. A symmetric extension is a definable inner model of a generic extension. It contains the ground model and there the axiom of choice can consistently fail.

Let $\PP$ be a notion of forcing, by which we mean a partially ordered class with a maximum element $1$.\footnote{We will explicitly state when we deal with a proper class, though. Any mention of forcing related terminology, unless mentioned explicitly will refer to forcing with a partially ordered set.} We adopt the convention that $q\leq p$ means that $q$ is stronger than $p$, or $q$ extends $p$. Our notation with regards to names is taken from Jech \cite{Jech:ST2003}. Namely, $\dot x$ will denote a $\PP$-name, $\check x$ will denote a canonical name for a ground model set, and $\dot x^G$ denotes the interpretation of $\dot x$ by the filter $G$.

If $\sG$ is a group of automorphisms of $\PP$ we say that $\sF$ is a \textit{normal filter of subgroups over $\sG$} if it is a filter of subgroups which is closed under conjugation. Namely $\sF$ is a non-empty collection of subgroups of $\sG$ closed under finite intersections and supergroups; and if $\pi\in\sG$ and $H\in\sF$, then also $\pi H\pi^{-1}\in\sF$.\footnote{It is enough to require that $\sF$ is a normal filter base, i.e.\ the intersections and conjugations contain an element of the base.}

If $\pi\in\aut(\PP)$, then $\pi$ extends to the $\PP$-names recursively:\[\pi\dot x=\{\tup{\pi p,\pi\dot y}\mid\tup{p,\dot y}\in\dot x\}.\]The Symmetry Lemma presents ties between $\pi$ and the forcing relation (this is Lemma~14.37 in \cite{Jech:ST2003}).
\begin{lemma*}[The Symmetry Lemma]
Let $\PP$ be a forcing notion, $\varphi$ be a formula in the language of forcing with respect to $\PP$, $\dot x$ a $\PP$-name, and $\pi\in\aut(\PP)$. Then \[p\forces\varphi(\dot x)\iff\pi p\forces\varphi(\pi\dot x).\qedhere\]
\end{lemma*}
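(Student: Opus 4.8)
The plan is to prove the biconditional by first reducing it to a single implication, and then establishing that implication semantically via the action of $\pi$ on generic filters. Observe that it suffices to prove the forward direction ``$p\forces\varphi(\dot x)\implies\pi p\forces\varphi(\pi\dot x)$'' for every $\pi\in\aut(\PP)$: applying this with $\pi^{-1}$ (again an automorphism) in place of $\pi$ and with $\pi p,\pi\dot x$ in place of $p,\dot x$ yields the reverse implication, since $\pi^{-1}$ acts on names as the inverse of the action of $\pi$ — this last point being a routine $\in$-induction on the rank of names, using that $\pi(\pi^{-1}p)=p$.

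The key lemma is that for every generic filter $G$ and every $\PP$-name $\dot x$ one has $(\pi\dot x)^{\pi G}=\dot x^G$, where $\pi G=\{\pi q\mid q\in G\}$. Since $\pi$ is an order-automorphism fixing $1$, the set $\pi G$ is again generic and $q\in G\iff\pi q\in\pi G$. Unwinding the recursive definitions of $\pi\dot x$ and of interpretation, and arguing by $\in$-induction on $\dot x$,
\[(\pi\dot x)^{\pi G}=\{(\pi\dot y)^{\pi G}\mid\tup{p,\dot y}\in\dot x,\ \pi p\in\pi G\}=\{\dot y^G\mid\tup{p,\dot y}\in\dot x,\ p\in G\}=\dot x^G,\]
the middle equality combining the induction hypothesis (valid as $\rank\dot y<\rank\dot x$) with the equivalence $\pi p\in\pi G\iff p\in G$.

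With the key lemma in hand the conclusion is immediate. Suppose $p\forces\varphi(\dot x)$ and let $H$ be any generic filter with $\pi p\in H$. Then $G:=\pi^{-1}H$ is generic, $p\in G$, and $G,H$ are mutually definable over the ground model via $\pi$, so $V[G]=V[H]$ and $V[G]\models\varphi(\dot x^G)$. By the key lemma, $\dot x^G=(\pi\dot x)^{\pi G}=(\pi\dot x)^H$, whence $V[H]\models\varphi((\pi\dot x)^H)$. As $H$ was arbitrary, $\pi p\forces\varphi(\pi\dot x)$.

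The only genuine obstacle is the rank induction yielding $(\pi\dot x)^{\pi G}=\dot x^G$ (together with the analogous bookkeeping that $\dot y\mapsto\pi\dot y$ is a bijection of the name class with inverse induced by $\pi^{-1}$); the rest is formal. Should one wish to avoid invoking actual generic filters — for instance to keep the argument purely syntactic, or to accommodate class forcing — one instead inducts directly on the complexity of $\varphi$: the atomic cases $\dot x\in\dot y$ and $\dot x=\dot y$ are handled by a simultaneous induction on $(\rank\dot x,\rank\dot y)$ from the recursive clauses defining $\forces$ on atomic formulas, and the cases $\neg\psi$, $\psi_0\wedge\psi_1$, $\exists y\,\psi$ follow formally from the induction hypothesis using that $p\mapsto\pi p$ is an order-automorphism and $\dot y\mapsto\pi\dot y$ a bijection of names.
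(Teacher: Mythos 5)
Your proof is correct. The paper does not prove this lemma itself but cites it as Lemma~14.37 of Jech, and your argument --- the reduction to one implication via $\pi^{-1}$, the $\in$-induction giving $(\pi\dot x)^{\pi G}=\dot x^G$, and the semantic quantification over generic filters (with the syntactic induction on $\varphi$ noted as an alternative) --- is precisely the standard proof being invoked there.
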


\begin{definition}
We say that $\tup{\PP,\sG,\sF}$ is a \textit{symmetric system} if $\PP$ is a notion of forcing, $\sG$ is an automorphism group of $\PP$, and $\sF$ is a normal filter of subgroups over $\sG$. We say that the system is \textit{homogeneous} if whenever $p,q\in\PP$, then there is $\pi\in\sG$ such that $\pi p$ and $q$ have a common extension.

We say that the system is \textit{strongly homogeneous} if for every condition $p$ there is a subgroup in $\sF$ which witnesses the homogeneity of $\PP\restriction p=\{q\in\PP\mid q\leq p\}$, the cone below $p$.
\end{definition}

Suppose that $\tup{\PP,\sG,\sF}$ is a symmetric system. If $\dot x$ is a $\PP$-name, we denote by $\sym_\sG(\dot x)$ the group $\{\pi\in\sG\mid\pi\dot x=\dot x\}$, and we say that $\dot x$ is \textit{$\sF$-symmetric} if $\sym_\sG(\dot x)\in\sF$. We recursively define the notion of $\dot x$ being \textit{hereditarily $\sF$-symmetric} if every name which appears in $\dot x$ is hereditarily $\sF$-symmetric and $\dot x$ is $\sF$-symmetric. The class of all hereditarily $\sF$-symmetric names is denoted by $\HS_\sF$. When the context is clear, we omit the subscripts and write $\sym(\dot x)$ and that $\dot x\in\HS$, etc.

Similarly, we say that $A\subseteq\PP$ is a \textit{symmetric subset} if $\{\pi\in\sG\mid\pi\restriction A=\id\}\in\sF$. The proof of the next theorem, and more much, can be found in Chapter 15 of \cite{Jech:ST2003}.
\begin{theorem*}
Let $\tup{\PP,\sG,\sF}$ be a symmetric system, and let $G$ be a $V$-generic filter. Then $M=\HS^G=\{\dot x^G\mid\dot x\in\HS\}$ is a model of $\ZF$ such that $V\subseteq M\subseteq V[G]$ and $M$ is a transitive class of $V[G]$. 
\end{theorem*}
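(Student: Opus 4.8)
The plan is to verify each axiom of $\ZF$ in $M$, together with the structural claims, using the Symmetry Lemma as the principal tool. First the easy facts. For $V\subseteq M$: by $\in$-induction every canonical name $\check x$ lies in $\HS$, since $\sym_\sG(\check x)=\sG\in\sF$ and every name occurring in $\check x$ is again canonical; hence $x=\check x^G\in M$ for all $x\in V$. Every element of $M$ is $\dot x^G$ for some name, so $M\subseteq V[G]$; and if $a\in\dot x^G$ with $\dot x\in\HS$, then $a=\dot y^G$ for some $\dot y$ occurring in $\dot x$ (i.e. $\tup{p,\dot y}\in\dot x$ with $p\in G$), so $\dot y\in\HS$ and $a\in M$ — that is, $M$ is transitive. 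Transitivity yields Extensionality, well-foundedness of $\in$ on $V[G]$ yields Foundation in $M$, and $\check\omega\in\HS$ yields Infinity.

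Next, the axioms asserting closure under simple operations: Pairing, Union, Separation. For each I would exhibit an explicit name and check it is hereditarily $\sF$-symmetric. For Pairing of $\dot x^G,\dot y^G$, take $\dot z=\{\tup{1,\dot x},\tup{1,\dot y}\}$, so $\sym(\dot z)\supseteq\sym(\dot x)\cap\sym(\dot y)\in\sF$. For Union of $\dot x^G$, take $\dot u=\{\tup{p,\dot z}\mid\exists\tup{q,\dot y}\in\dot x,\ \tup{r,\dot z}\in\dot y,\ p\leq q,r\}$; an automorphism fixing $\dot x$ permutes the triples defining $\dot u$, so $\sym(\dot u)\supseteq\sym(\dot x)$, and the $\dot z$'s occur two levels inside $\dot x$, hence lie in $\HS$. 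For Separation with formula $\varphi$ and parameters named by $\dot b_1,\dots,\dot b_k\in\HS$, take $\dot s=\{\tup{\dot w,p}\mid\dot w\in\dom(\dot x),\ p\forces(\dot w\in\dot x\wedge\varphi^{\HS}(\dot w,\dot b_1,\dots,\dot b_k))\}$, where $\forces$ and $\varphi^{\HS}$ refer to the forcing relation for the symmetric submodel. The two facts that make all of this work are the Symmetry Lemma and the closure of $\HS$ under the action of $\sG$ — the latter because $\sym(\pi\dot y)=\pi\,\sym(\dot y)\,\pi^{-1}\in\sF$ by normality (conjugation-closure) of $\sF$. Together with $\dom(\dot x)\subseteq\HS$ they give $\sym(\dot s)\supseteq\sym(\dot x)\cap\bigcap_i\sym(\dot b_i)\in\sF$ and $\dot s\in\HS$, with $\dot s^G=\{a\in\dot x^G\mid M\models\varphi(a,\dots)\}$.

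The main work, I expect, is Power Set and Replacement. For Power Set of $\dot x^G$, put $H:=\sym(\dot x)$ and let $Y=\{\dot y\in\HS\mid\dot y\subseteq\dom(\dot x)\times\PP\}$ be the set of hereditarily symmetric ``nice names'' for subsets of $\dot x^G$; $Y$ is a set, being contained in $\power(\dom(\dot x)\times\PP)$. The key point is that every subset of $\dot x^G$ lying in $M$ has a name in $Y$: if $a=\dot z^G\subseteq\dot x^G$ with $\dot z\in\HS$, then $\dot y:=\{\tup{\dot w,p}\mid\dot w\in\dom(\dot x),\ p\forces\dot w\in\dot z\}$ satisfies $\dot y^G=a$ and, by the Symmetry Lemma, $\sym(\dot y)\supseteq\sym(\dot x)\cap\sym(\dot z)\in\sF$, so $\dot y\in Y$. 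Moreover each $\pi\in H$ maps $Y$ bijectively to itself, using $\pi[\dom(\dot x)]=\dom(\dot x)$ (from $\pi\dot x=\dot x$), $\pi[\PP]=\PP$, and closure of $\HS$ under $\sG$; hence $\dot p:=\{\tup{1,\dot y}\mid\dot y\in Y\}$ has $\sym(\dot p)\supseteq H$, is hereditarily symmetric, and $\dot p^G=\power(\dot x^G)\cap M$. For Replacement (equivalently Collection), the obstacle is bounding the ranks of witnesses: given $\dot x\in\HS$ and $\varphi$, use Replacement in $V$ to find an ordinal $\theta$ so large that whenever $\dot w\in\dom(\dot x)$ and a condition forces (in the relativized sense) the existence of a $b$ with $\varphi^{\HS}(\dot w,b)$, there is such a $b$ with an $\HS$-name of rank $<\theta$. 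Then $\HS\cap V_\theta$ is a set closed under all of $\sG$, so $\dot r:=\{\tup{1,\dot y}\mid\dot y\in\HS\cap V_\theta\}$ has $\sym(\dot r)=\sG\in\sF$, is hereditarily symmetric, and $\dot r^G$ contains enough witnesses; Separation in $M$ (already verified) then extracts the required image. The one genuinely delicate point underlying the Separation, Power Set and Replacement arguments is the correct formulation of the forcing relation $\forces$ for the symmetric submodel $M$ — making precise ``$M\models\varphi$'' in terms of conditions and $\HS$-names — which one must set up so that it, too, obeys the Symmetry Lemma for $\pi\in\sG$; once that is in place the rest is bookkeeping with names, ranks, and the closure properties of $\sF$. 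The details are carried out in Chapter~15 of \cite{Jech:ST2003}.
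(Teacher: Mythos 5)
Your proposal is correct and follows the standard argument that the paper itself defers to (Chapter~15 of \cite{Jech:ST2003}) rather than proving: canonical names give $V\subseteq M$, hereditary symmetry gives transitivity, normality of $\sF$ gives closure of $\HS$ under the action of $\sG$, the relativized Symmetry Lemma handles Separation, nice names handle Power Set, and a rank/reflection bound handles Collection. The only micro-slip is that your $\dot p^G$ is a priori only a superset of $\power(\dot x^G)\cap M$ (a name in $Y$ need not denote a subset of $\dot x^G$), which is repaired by one application of the Separation instance you have already verified.
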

We say that $M$ in the above theorem is a \textit{symmetric extension} of $V$.

Finally, the forcing relation has a relativized version $\forces^\HS$ obtained by relativizing the quantifiers and variables to the class $\HS$. This relation has the same basic properties as the usual forcing relation, with a notable exception that $p\forces^\HS\exists x\,\varphi(x)$ need not imply that there is some $\dot x\in\HS$ such that $p\forces^\HS\varphi(\dot x)$.\footnote{A relatively simple example for this can be found in the Cohen model. In this model, one adds countably many reals $a_n$, and then remembers only the set $A$ of these reals, but not its countability. If $\dot a_n$ denotes the $n$th canonical real and $\dot A$ denotes the canonical name for the set of reals, all of which are in $\HS$, then $1\forces^\HS\exists x(x\in\dot A\land\check 0\in x)$. It is not hard to verify, however, that if $\dot x\in\HS$ and $1\forces^\HS\dot x\in\dot A$, then $\{n\mid 1\not\forces\dot x=\dot a_n\}$ is finite, and by an easy density argument, $1$ does not decide the value of $\check0\in\dot a_n$ for any finitely many reals.}

Moreover, the Symmetry Lemma has a relativized version as well. If $\pi\in\sG$, then \[p\forces^\HS\varphi(\dot x)\iff\pi p\forces^\HS\varphi(\pi\dot x).\]

\subsection{Filters on sets}\label{subsection:filters-prelim}
Let $X$ be a set. Let $\cF\subseteq\power(X)$ be a \textit{filter on $X$}, namely a family of subsets of $X$ closed under finite intersections and upwards inclusion, which does not contain the empty set.
\begin{itemize}
\item We say that $\cF$ is an \textit{ultrafilter} if it is not contained in any larger filter on $X$. Alternatively $\cF$ is an ultrafilter, if for every $A\subseteq X$, either $A\in\cF$ or $X\setminus A\in\cF$.
\item We say that $\cF$ is a \textit{uniform filter} if for all $A\in\cF$, $|A|=|X|$.
\item We say that $\cF$ is a \textit{principal filter} if $\bigcap\cF\in\cF$. If $\cF$ is an ultrafilter, then it is principal if and only if it contains a singleton. A non-principal filter is called \textit{free}.
\item We say that $\cF$ is \textit{$\kappa$-complete} if for all $\gamma<\kappa$ and for all $\{X_\alpha\mid\alpha<\gamma\}\subseteq\cF$, $\bigcap\{X_\alpha\mid\alpha<\gamma\}\in\cF$.
\end{itemize}
\begin{definition}
We say that an $\aleph$ number $\kappa$ is a \textit{measurable cardinal} if there exists a $\kappa$-complete free ultrafilter on $\kappa$. We say that $\kappa$ is a \textit{strongly compact cardinal} if every $\kappa$-complete filter can be extended to a $\kappa$-complete ultrafilter.
\end{definition}
Easily by induction, every filter is $\omega$-complete, and therefore by an easy application of Zorn's Lemma, $\ZFC$ proves that $\aleph_0$ is a strongly compact and a measurable cardinal.\footnote{This leads to the sometimes additional requirement that $\kappa$ is uncountable in the definitions of measurable and strongly compact cardinals. For our purposes, however, it is better to allow $\aleph_0$ to be considered as measurable or strongly compact.}
\begin{theorem*}[Feferman \cite{Feferman:1964}]
It is consistent with $\ZF$ that all ultrafilters on $\aleph_0$ are principal. In other words, it is consistent that $\aleph_0$ is not a measurable cardinal.
\end{theorem*}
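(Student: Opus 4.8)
The plan is to realize Feferman's result via a symmetric extension in the framework described above; the relevant model is the \emph{basic Cohen model}, the prototype for the generalizations in the following sections. Let $\PP=\Add(\omega,\omega)$ be the poset of finite partial functions $p\colon\omega\times\omega\to 2$, so a $V$-generic $G$ adds Cohen reals $a_n=\{k<\omega\mid\exists p\in G\,(p(n,k)=1)\}$ for $n<\omega$. Let $\sG$ be the group of all finitary permutations of $\omega$, acting on $\PP$ by $(\pi p)(n,k)=p(\pi^{-1}n,k)$, so that $\pi\dot a_n=\dot a_{\pi n}$; and let $\sF$ be the normal filter of subgroups generated by the pointwise stabilizers $\fix(E)=\{\pi\in\sG\mid\pi\restriction E=\id\}$ for $E$ ranging over the finite subsets of $\omega$. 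Then $\tup{\PP,\sG,\sF}$ is a homogeneous symmetric system, and I take $M=\HS^G$ to be the resulting symmetric extension, with $V\subseteq M\subseteq V[G]$ and $M\models\ZF$. Since ``principal'' is the negation of ``free'' for ultrafilters, it suffices to show that $M$ has no free ultrafilter on $\omega$.

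Suppose toward a contradiction that $M$ thinks there is a free ultrafilter $\cU$ on $\omega$; fix a name $\dot\cU\in\HS$ and a condition $p_0\in G$ with $p_0\forces^\HS$``$\dot\cU$ is a free ultrafilter on $\check\omega$'', and fix a finite $E\subseteq\omega$ with $\fix(E)\subseteq\sym(\dot\cU)$. Enlarging $E$, I may assume $\dom p_0\subseteq E\times\omega$ (if $p_0$ uses a fresh column, replace it by its restriction to $E\times\omega$, which forces the same statement since $\PP$ is homogeneous and the statement mentions only names of support $E$). Writing $\PP\restriction E$ for the subposet of conditions with domain contained in $E\times\omega$, the first step is a \emph{locality lemma}: for a $\PP\restriction E$-name $\dot X$ for a subset of $\omega$, whether $q\forces^\HS\dot X\in\dot\cU$ depends only on $q\restriction(E\times\omega)$. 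This is a direct application of the relativized Symmetry Lemma: given $q,q'$ agreeing on $E\times\omega$, pick $\pi\in\fix(E)$ moving the finitely many columns of $q$ outside $E$ clear of those used by $q'$; then $\pi q$ and $q'$ are compatible, while $\pi$ fixes both $\dot X$ (a $\PP\restriction E$-name) and $\dot\cU$, and one reads off that $q$ and $q'$ decide $\dot X\in\dot\cU$ the same way. It follows that $\cU\cap\power(\omega)^{V[G\restriction E]}$ is a free ultrafilter on $\omega$ lying in $V[G\restriction E]$; as $V[G\restriction E]\models\ZFC$ this is so far harmless, but it pins the problem down to the way $\cU$ must decide the \emph{fresh} Cohen reals $a_m$ ($m\notin E$) and the symmetric sets built from them.

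The heart of the argument is that no $\fix(E)$-symmetric name can survive this constraint. Each fresh $a_m$ is Cohen-generic over $V[G\restriction E]$, the $a_m$ for $m\notin E$ are mutually generic there and pairwise interchangeable by transpositions in $\fix(E)$, yet $\cU$ must decide, for each such $m$, which of $a_m$, $\omega\setminus a_m$ lies in $\cU$ — and likewise for every Boolean and coordinatewise combination of finitely many of them (already the even-indexed subset of a single $a_m$ shows that the naive filter generated by $\cU\cap\power(\omega)^{V[G\restriction E]}$ together with the fresh reals is not an ultrafilter). The plan is to exhibit a single $Y\in M$, coded symmetrically by the fresh columns, whose membership in an ultrafilter is forced to clash with the $\fix(E)$-symmetry of $\dot\cU$ together with $p_0\forces^\HS$``$\dot\cU$ is a filter'': transport a deciding condition by a transposition of fresh columns via the Symmetry Lemma, note it is compatible with the original, and pass to a common refinement. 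I expect this last step to be the main obstacle: it is exactly Feferman's combinatorial bookkeeping, and it must be carried out carefully, because only index‑permutation symmetry is available here — so the cases $a_m\in\cU$ and $\omega\setminus a_m\in\cU$ genuinely require separate treatment — and because the mutual genericity of the $a_m$ over $V[G\restriction E]$ has to be used essentially rather than merely formally. Once that contradiction is secured, $M$ has no free ultrafilter on $\omega$, i.e.\ every ultrafilter on $\aleph_0$ in $M$ is principal, so $\aleph_0$ is not measurable in $M$.
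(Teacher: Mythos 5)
There is a genuine gap, and it is fatal to the chosen model rather than a mere bookkeeping obstacle. The symmetric system you set up --- $\Add(\omega,\omega)$ with the group of finitary permutations of the \emph{columns} and the filter generated by $\fix(E)$ for finite $E$ --- is the basic Cohen model, and by the Halpern--L\'evy theorem the Boolean Prime Ideal theorem holds there. Consequently that model \emph{does} contain free ultrafilters on $\omega$, so the contradiction you are aiming for in the last paragraph cannot be secured by any amount of care. You can see the failure concretely in your own transport argument: if $q\forces^\HS\dot a_m\in\dot\cU$ and $\pi\in\fix(E)$ transposes two fresh columns $m,m'$, then $\pi q$ is compatible with $q$ and a common extension forces $\dot a_m\cap\dot a_{m'}\in\dot\cU$; but the intersection of two mutually generic Cohen reals is infinite and co-infinite, so no clash with freeness or with filterhood ever materializes. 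Column permutations can only exchange the fresh reals with one another; they can never turn a set into something almost disjoint from itself, which is what an ultrafilter argument of this shape needs.

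The repair is to keep the poset but change the automorphism group: use Feferman's ``bit-flip'' automorphisms $\pi_A$ for $A\subseteq\omega\times\omega$, defined by $\pi_A p(n,k)=\chi_A(n,k)+p(n,k)\pmod 2$, with $\sF$ generated by $\fix(E)=\{\pi_A\mid A\cap(E\times\omega)=\varnothing\}$ for finite $E\subseteq\omega$. This is exactly the $\kappa=\aleph_0$ case of \autoref{thm:g-feferman}. Given $\dot U\in\HS$ with $\fix(E)\leq\sym(\dot U)$, pick a column $m\notin E$ and a condition $q$ deciding $\dot x_m\in\dot U$; flipping the tail of column $m$ above the finitely many bits $q$ mentions yields $\pi$ with $\pi q=q$ and $\pi\dot U=\dot U$, while $\pi\dot x_m$ is forced to agree with the complement of $\dot x_m$ off a finite set. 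Hence $q$ forces a finite set into $\dot U$, so $\dot U$ is principal (here ``uniform'' and ``free'' coincide on $\omega$). Note that your locality lemma is fine and survives in the corrected system, but it is the group, not the support structure, that carries the whole argument.
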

This theorem was extended by Andreas Blass to obtain an even stronger result.
\begin{theorem*}[Blass \cite{Blass:1977}]
It is consistent with $\ZF$ that all ultrafilters on all sets are principal.
\end{theorem*}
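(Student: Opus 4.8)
The plan is to produce a symmetric extension $M$ of a model $V\models\ZFC$ in which \emph{no set carries a free ultrafilter}. Since an ultrafilter is free exactly when it is not principal, and $\ZF$ proves that every nonempty set carries principal ultrafilters, this is precisely the statement. The model will be obtained by running, over a proper class of indices, a symmetric forcing of Feferman type: at each index one adds a generic object (a Cohen real, or a generic partition or colouring of a set) together with a large group of automorphisms that move it, and keeps only the hereditarily symmetric names with respect to a finite-support filter. Feferman's theorem is the case in which a single such module already prevents $\omega$ from carrying a free ultrafilter; the work is to make a class-sized version that handles every set at once.

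First I would recall the base module. In Feferman's model \cite{Feferman:1964} one adds $\omega$-many mutually generic objects $\langle a_n\mid n<\omega\rangle$, takes $\sG$ to be the full symmetric group permuting the indices, and takes $\sF$ to be generated by the pointwise stabilizers $\fix(E)$ of finite $E\subseteq\omega$. In the resulting $M_0$, every set has a finite support, and $\omega$ carries no free ultrafilter: if $\cU\in\HS$ named one with support $E$, then $\{c\in A\mid c\in\cU\}$ (with $A=\{a_n\mid n<\omega\}$) would be a subset of $A$ fixed by $\fix(E)$, and analysing such subsets via the Symmetry Lemma against the freeness and maximality of $\cU$ yields a contradiction. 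This $M_0$ is the template.

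Next I would globalise. Let $\PP$ be the finite-support product, indexed by a proper class $I$ rich enough that every set receives a generic object attached to some $i\in I$ (index by ordinals, or by a definable stratification of $V$), of copies of the base module; let $\sG$ be generated by the symmetric group of each block together with permutations of the blocks, and let $\sF$ be generated by pointwise stabilizers of subsets of $I$ meeting each block finitely. After checking that this tame class forcing carries a symmetric inner model --- so $M\models\ZF$ and $V\subseteq M\subseteq V[G]$ --- the central lemma to prove is that every set of $M$ has a support of that kind. Granting it, suppose $\cU\in M$ is a free ultrafilter on a set $X$ with support $E$. One argues that $X$ cannot be rigid relative to the generic structure: if $X$ is Dedekind-infinite it has a countable subset $Y$, and $Y\in\cU$ would push a free ultrafilter forward to $\omega$, impossible by the base module; in the remaining cases --- $\cU$ avoiding every countable subset of $X$, or $X$ Dedekind-finite --- the finiteness of the support of $X$ forces a block whose symmetric group lies in $\fix(E)\subseteq\sym(\cU)$ and acts on a $\cU$-large part of $X$ with no small invariant subset, so $\cU$ restricted there is invariant under that symmetric group; a nonprincipal ultrafilter cannot be, since invariance would make it contain a set together with its complement, or concentrate on an invariantly definable --- hence too symmetric --- piece, starting a descent the finite-support condition forbids. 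In particular $M$ has no amorphous set, the cofinite ultrafilter on such a set being free.

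The hard part will be making the symmetric system meet all the demands simultaneously: that $M$ be a genuine model of $\ZF$ (the usual class-forcing bookkeeping), that \emph{every} set of $M$ inherit a sufficiently small support from the structure, and that for \emph{every} infinite $X\in M$, well-orderable or not, there is a group of automorphisms fixing any prescribed finite part of $E$ while acting on a $\cU$-large part of $X$ symmetrically enough to bar a nonprincipal invariant ultrafilter. Calibrating the index class, the block structure, and the admissible supports so that no exotic, highly homogeneous $X$ slips between these cases --- and so that the recursion implicit in the ``concentrates on a piece'' case cannot run forever --- is the substance of Blass's construction \cite{Blass:1977}.
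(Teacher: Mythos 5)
First, a point of comparison: the paper does not prove this theorem --- it is quoted from Blass \cite{Blass:1977} --- and the only ingredient it reproduces is the folklore \autoref{prop:first-measurable-is-measurable}, which already disposes of the well-orderable case: in any symmetric extension of $L$ in which $\aleph_0$ is not measurable (for instance Feferman's single module over $L$), every ultrafilter on an ordinal is principal. Your class-indexed product of Feferman blocks is therefore not needed for that half of the statement, and it does not by itself address the half that actually requires work.

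That other half is where your proposal has a genuine gap. The entire difficulty in Blass's theorem lies with the non-well-orderable sets of the symmetric model --- the class of adjoined generics itself, its finite powers, and whatever else the construction produces --- and for these you offer only the sentence asserting that some block's symmetric group lies in $\fix(E)\subseteq\sym(\cU)$ and ``acts on a $\cU$-large part of $X$ with no small invariant subset,'' whence ``a nonprincipal ultrafilter cannot be'' invariant. As stated this is not an argument: invariance of a free filter under a large automorphism group is unremarkable (the cofinite filter is invariant under everything), and the inference from invariance to a contradiction only works for actions transitive enough that some set and one of its images partition a $\cU$-large piece. Establishing that \emph{every} set of $M$ decomposes into a well-orderable part plus pieces on which the stabilizer of its support acts with exactly that kind of transitivity --- a structure theorem for the sets of the model --- is the actual content of Blass's proof, and you explicitly defer it to ``the substance of Blass's construction.'' Likewise the ``descent'' you invoke to exclude the case where $\cU$ concentrates on an invariantly definable piece comes with no well-foundedness argument. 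Until that structure lemma is stated and proved, what you have is a plausible outline of the strategy rather than a proof.
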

In his paper Blass sketches the following argument---which he attributes as folklore---to show that Feferman's model ``almost does the job''.
\begin{proposition}\label{prop:first-measurable-is-measurable}
The least $\kappa$ which carries a free ultrafilter is a measurable cardinal and the free ultrafilters on $\kappa$ are uniform. Consequently, in any symmetric extension of $L$, if $\aleph_0$ is not measurable, then all ultrafilters on ordinals are principal.
\end{proposition}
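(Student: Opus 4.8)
The plan is to handle the two sentences separately. For the first, let $\kappa$ be the least ordinal carrying a free ultrafilter and let $\cF$ be any free ultrafilter on it. First $\kappa$ must be a cardinal, since a bijection $\kappa\to|\kappa|$ transports $\cF$ to a free ultrafilter on $|\kappa|$ and minimality forces $|\kappa|=\kappa$; and $\kappa$ is infinite, as finite sets carry only principal ultrafilters. Next $\cF$ is $\kappa$-complete: otherwise fix $\gamma<\kappa$ and $\{X_\alpha\mid\alpha<\gamma\}\subseteq\cF$ with $\bigcap_\alpha X_\alpha\notin\cF$, where $\gamma$ is necessarily infinite since $\cF$ is closed under finite intersections; passing to the complements $Y_\alpha=\kappa\setminus X_\alpha$, refining to a pairwise disjoint family covering $\bigcup_\alpha Y_\alpha$, and adjoining $\bigcap_\alpha X_\alpha$ as one more block, one gets a partition of $\kappa$ into at most $\gamma+1<\kappa$ blocks, none of which lies in $\cF$. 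The map sending each point of $\kappa$ to the index of its block pushes $\cF$ forward to an ultrafilter on the ordinal $\gamma+1<\kappa$ which is again free, since each singleton pulls back to one of the blocks; this contradicts the minimality of $\kappa$. Hence $\cF$ is $\kappa$-complete, so $\kappa$ is a measurable cardinal; and $\cF$ is uniform, for if $A\in\cF$ had $|A|<\kappa$ then $A$ would be a union of fewer than $\kappa$ singletons, none in $\cF$, and a union of fewer than $\kappa$ sets outside an ultrafilter stays outside it.

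For the second sentence, suppose $M$ is a symmetric extension of $L$ in which $\aleph_0$ is not measurable, and assume towards a contradiction that some ordinal carries a free ultrafilter in $M$. By the first part the least such ordinal $\kappa$ is measurable in $M$, and $\kappa>\aleph_0$ by hypothesis; fix a $\kappa$-complete free ultrafilter $\cU\in M$ on $\kappa$. Working inside $M$, I would form the ultrapower $N=\mathrm{Ult}(L,\cU)$ using all functions $f\colon\kappa\to L$ that lie in $M$. Using $L$ rather than $V$ as the base is what lets {\L}o{\'s}'s theorem go through without choice: whenever $\{\xi\mid L\models\exists y\,\varphi(y,\vec f(\xi))\}\in\cU$, a Skolem witness is given by the function $\xi\mapsto$ the $<_L$-least $y$ with $L\models\varphi(y,\vec f(\xi))$, which again lies in $M$. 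Moreover $N$ is well-founded: an infinite descending $\in^N$-chain would be represented by a sequence $\langle f_n\mid n<\omega\rangle$ of functions \emph{in} $M$ with $\{\xi\mid f_{n+1}(\xi)\in f_n(\xi)\}\in\cU$ for all $n$, and since this $\omega$-sequence of sets itself belongs to $M$, the $\kappa$-completeness of $\cU$ (note $\kappa>\omega$) makes the intersection nonempty and produces an infinite $\in$-descending chain in $L$. Collapsing $N$ and running the standard computations gives a nontrivial elementary embedding $j\colon L\to L$ with critical point $\kappa$ (the critical point equals $\kappa$ because $\cU$ is $\kappa$-complete but, being free, not $\kappa^+$-complete); hence $M\models$ ``$0^\sharp$ exists''.

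To conclude, a symmetric extension of $L$ is by construction a definable inner model of a set-generic extension $L[G]$ of $L$, and the existence of $0^\sharp$ is absolute upward from an inner model containing all the ordinals, so $L[G]\models$ ``$0^\sharp$ exists''; this contradicts the classical theorem that $0^\sharp$ cannot be added by set forcing. Therefore no ordinal carries a free ultrafilter in $M$, and since by definition an ultrafilter which is not free is principal, every ultrafilter on an ordinal in $M$ is principal. The steps I expect to demand the most care are the two choiceless manoeuvres in the ultrapower---verifying {\L}o{\'s}'s theorem via the canonical well-order of $L$, and extracting well-foundedness from the $\kappa$-completeness of $\cU$ as computed \emph{inside $M$}---together with the appeal to the standard but nontrivial fact that set forcing over $L$ produces no $0^\sharp$.
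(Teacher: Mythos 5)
Your first paragraph is essentially the paper's own argument: the paper also takes the least bad partition of $\kappa$, pushes the ultrafilter forward along the index map, and uses minimality of $\kappa$ to conclude the pushforward is principal, which is absurd; completeness then gives uniformity exactly as you say. That half is fine.

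For the second sentence you take a genuinely different route, and it has a gap. The paper does \emph{not} form the ultrapower inside the choiceless model $M$; it observes that $\kappa$ is measurable in the inner model $L[U]$ (more precisely $L[U\cap L[U]]$), which is a model of $\ZFC$, and then invokes the fact that forcing over $L$ cannot create a measurable cardinal (equivalently, cannot add $0^\sharp$). The reason for this detour is precisely the step you flag as delicate: well-foundedness of $\mathrm{Ult}(L,\cU)$ computed in $M$. In $\ZF$ without $\DC$ --- and symmetric extensions of $L$ such as Feferman's model are exactly the models where even countable choice fails --- well-foundedness is \emph{not} equivalent to the nonexistence of infinite descending $\in^N$-chains, so ruling out such chains does not establish that the Mostowski collapse exists; and even if one is handed a descending $\omega$-chain of equivalence classes, extracting a sequence of representative functions $\tup{f_n\mid n<\omega}$ from those classes already requires countably many choices. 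So the sentence ``an infinite descending chain would be represented by a sequence of functions in $M$'' is where the argument breaks. Your Łoś verification via $<_L$ and the appeal to ``set forcing over $L$ adds no $0^\sharp$'' are both correct, but the cleanest repair is the paper's: check that $U\cap L[U]$ is a $\kappa$-complete ultrafilter from the point of view of $L[U]$ (any ${<}\kappa$-sequence of its members lying in $L[U]$ also lies in $M$, where completeness applies, and the intersection is definable from the sequence, hence in $L[U]$), and then run the ultrapower entirely inside that inner model of $\ZFC$, where well-foundedness is standard.
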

\begin{proof}
Let $\kappa$ be the least ordinal on which there is a free ultrafilter $U$. We will show that $U$ is in fact a $\kappa$-complete measure, and thus $\kappa$ is in fact measurable. Let $\gamma\leq\kappa$ be the least such that there is a partition of $\kappa$, $\{A_\alpha\mid\alpha<\gamma\}$, such that no $A_\alpha$ lies in $U$. Define the map $f(\xi)=\alpha$ if and only if $\xi\in A_\alpha$. Then $f$ is a surjective map of $\kappa$ onto $\gamma$ which maps $U$ to an ultrafilter $U_*$ on $\gamma$ defined as $\{A\subseteq\gamma\mid f^{-1}(A)\in U\}$. If $\gamma<\kappa$, then by the minimality of $\kappa$ it follows that $U_*$ is principal. Therefore there is some $\alpha<\gamma$ such that $\{\alpha\}\in U_*$, which therefore means that $f^{-1}(\{\alpha\})=A_\alpha\in U$. Of course, this is impossible, so $\gamma=\kappa$. So any free ultrafilter on $\kappa$ is $\kappa$-complete, and so uniform. This implies that $\kappa$ is measurable in $L[U]$. In particular, if we work in a symmetric extension of $L$, as no measurable cardinals can be added by forcing, either $\aleph_0$ is measurable, or all ultrafilters on ordinals are principal.
\end{proof}
In the proof above lies the following fact which is worth an explicit mention.
\begin{corollary}
Suppose an infinite ordinal $\kappa$ carries a uniform ultrafilter. If $\aleph_0$ is not measurable, then there is an inner model of $\ZFC$ with a measurable cardinal.\qed
\end{corollary}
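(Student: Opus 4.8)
The plan is to read the conclusion off the proof of \autoref{prop:first-measurable-is-measurable}. First I would note that a uniform ultrafilter on an infinite ordinal is automatically free, since it cannot contain a singleton: a singleton does not have the cardinality of an infinite set. So the hypothesis guarantees that some ordinal carries a free ultrafilter, and hence there is a least such ordinal $\lambda$; as finite ordinals carry only principal ultrafilters, $\lambda$ is infinite.

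Next, \autoref{prop:first-measurable-is-measurable} (more precisely, its proof) shows that $\lambda$ is a measurable cardinal, witnessed by some $\lambda$-complete free ultrafilter $U$ on $\lambda$, and that $\lambda$ is measurable in $L[U]$. Here $L[U]$ is built using $U$ as a predicate restricted to the subsets of $\lambda$ appearing in the model; it satisfies $\ZFC$ (indeed it carries a global well-ordering, since $L[A]\models\ZFC$ for any $A$), and $\bar U = U\cap L[U]\in L[U]$ remains a $\lambda$-complete free ultrafilter on $\lambda$ from the point of view of $L[U]$: ultrafilterness holds because $U$ decides every subset of $\lambda$ lying in $L[U]$; $\lambda$-completeness is inherited because intersections of fewer than $\lambda$-many sets are computed the same way in $L[U]$ and in $V$; and freeness holds because no singleton belongs to $U$. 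Moreover $\lambda$ is still a cardinal in $L[U]\subseteq V$, hence an $\aleph$ there.

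It remains only to rule out $\lambda=\aleph_0$. But if $\aleph_0$ carried a free ultrafilter it would itself be the least ordinal carrying one, and hence measurable by the above — contradicting the assumption. So $\lambda>\aleph_0$, and $L[U]$ is an inner model of $\ZFC$ in which $\lambda$ is an uncountable measurable cardinal, as desired.

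There is no real obstacle here; the only point worth watching is that the whole argument takes place over $\ZF$, so one wants to be sure that the construction of $L[U]$, the verification that $L[U]\models\ZFC$, and the absoluteness facts used to see that $\bar U$ stays a $\lambda$-complete ultrafilter all go through with no appeal to choice in $V$ — which they do, being exactly the ingredients already present in the proof of \autoref{prop:first-measurable-is-measurable}.
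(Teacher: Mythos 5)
Your proposal is correct and follows exactly the route the paper intends: the corollary is extracted from the proof of \autoref{prop:first-measurable-is-measurable} (uniform implies free, the least ordinal carrying a free ultrafilter is measurable via the completeness argument, and $L[U]$ is the required inner model, with the non-measurability of $\aleph_0$ forcing that least ordinal to be uncountable). The extra care you take with $L[U]\models\ZFC$ and the downward absoluteness of completeness and cardinality is exactly the content the paper leaves implicit behind the \qed.
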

\section{Generalization of Feferman's proof}\label{section:gen-fef}
In a recent paper \cite{HerHowKer:2016} by Horst Herrlich, Paul Howard, and Eleftherios Tachtsis, the authors point out that it is open whether or not it is possible that there are ultrafilters on $\omega_1$, but there are no uniform ultrafilters on $\omega_1$. This can be done using a slight generalization of Feferman's argument from \cite{Feferman:1964},\footnote{For a modern approach see Example~15.59 in \cite{Jech:ST2003}.} as we will show in this section. 
\begin{theorem}\label{thm:g-feferman}
Suppose that $V$ is a model of $\GCH$, and $\kappa$ is a regular cardinal. Then there is a symmetric extension of $V$ with the same cardinals where there are no uniform ultrafilters on $\kappa$, and for all $\lambda<\kappa$, $2^\lambda=\lambda^+$.
\end{theorem}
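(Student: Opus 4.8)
The plan is to run the Feferman--Jech construction (\cite[Example~15.59]{Jech:ST2003}) one cardinal higher, using Cohen subsets of $\kappa$ in place of Cohen reals. Working in $V\models\GCH$, let $\PP=\Add(\kappa,\kappa^+)$, i.e.\ the conditions are partial functions $p\colon\kappa^+\times\kappa\to 2$ with $|\dom p|<\kappa$, ordered by reverse inclusion, and let $A_\alpha=\{\xi<\kappa\mid G(\alpha,\xi)=1\}$ be the generic subsets. Since $\kappa$ is regular, $\PP$ is $\kappa$-closed, so it adds no new subsets of any $\lambda<\kappa$; and since $\GCH$ gives $2^{<\kappa}=\kappa$, the generalized $\Delta$-system lemma makes $\PP$ satisfy the $\kappa^+$-chain condition. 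Hence $\PP$ preserves all cardinals and cofinalities.

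For the symmetric system I would take $\sG$ to be the group of automorphisms of $\PP$ generated by the coordinate permutations $\sigma\in\sym(\kappa^+)$, acting by $(\sigma p)(\sigma\alpha,\xi)=p(\alpha,\xi)$, together with the bit-flips indexed by functions $\varepsilon\colon\kappa^+\to 2$, acting by $(\varepsilon p)(\alpha,\xi)=p(\alpha,\xi)+\varepsilon(\alpha)\bmod 2$; and $\sF$ the normal filter of subgroups generated by $\fix(E)=\{\pi\in\sG\mid\pi\text{ fixes every coordinate in }E\}$ for $E\in[\kappa^+]^{<\kappa}$. One checks routinely that $\tup{\PP,\sG,\sF}$ is a strongly homogeneous symmetric system, that each canonical name $\dot A_\alpha$ lies in $\HS$ with support $\{\alpha\}$, and that $\check x\in\HS$ for every $x\in V$. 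Put $M=\HS^G$. Since $V\subseteq M\subseteq V[G]$, the model $M$ has the same cardinals as $V$; and since $\PP$ adds no new subsets of $\lambda<\kappa$ we get $\power(\lambda)^M=\power(\lambda)^V$, so $2^\lambda=\lambda^+$ holds in $M$ for every $\lambda<\kappa$. This reduces the theorem to the one substantial point: showing that $M$ has no uniform ultrafilter on $\kappa$.

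So suppose toward a contradiction that $\dot U\in\HS$ and some $p$ forces, relative to $\HS$, that $\dot U$ is a uniform ultrafilter on $\check\kappa$. Fix $E\in[\kappa^+]^{<\kappa}$ with $\fix(E)\subseteq\sym(\dot U)$; writing $\supp(q)$ for $\{\alpha\mid(\alpha,\xi)\in\dom q\text{ for some }\xi\}$, choose (using $|E\cup\supp(p)|<\kappa<\kappa^+$) a coordinate $\alpha\notin E\cup\supp(p)$. The bit-flip $\tau$ at coordinate $\alpha$ lies in $\fix(E)$, so $\tau\dot U=\dot U$ and $\tau p=p$, while $\tau\dot A_\alpha$ is forced to name $\kappa\setminus A_\alpha$. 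By the relativized Symmetry Lemma, $q\forces^\HS\dot A_\alpha\in\dot U$ iff $\tau q\forces^\HS\tau\dot A_\alpha\in\dot U$ for every $q\leq p$; consequently no $q\leq p$ with $\alpha\notin\supp(q)$ can decide ``$\dot A_\alpha\in\dot U$'', since for such $q$ we have $\tau q=q$, and deciding the statement either way would force $\dot U$ to contain both $A_\alpha$ and $\kappa\setminus A_\alpha$. From here the plan is to combine this observation with the coordinate permutations, the $\kappa$-closure of $\PP$, and the regularity of $\kappa$ (which keeps unions of fewer than $\kappa$ sets of size $<\kappa$ bounded in $\kappa$) in order to build a single condition below $p$ forcing that $\dot U$ omits some set together with its complement, contradicting that $\dot U$ is an ultrafilter.

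I expect this last step to be the main obstacle. Turning the abundance of symmetries of $\dot U$ into an outright contradiction with being an ultrafilter requires a careful density (or fusion) argument that matches the first-coordinate domains of the forcing conditions against the support $E$ and against the supports of the names occurring in $\dot U$; this is exactly the place where the adaptation of Feferman's combinatorics to an arbitrary regular $\kappa$ must be done with care. Everything else --- preservation of cardinals, the value of the continuum function below $\kappa$, and the verification that the symmetric system is well defined, normal, and strongly homogeneous --- is routine bookkeeping.
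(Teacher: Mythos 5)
There is a genuine gap, and it is located exactly where you flag it yourself: the ``last step'' you defer is the entire content of the theorem, and the symmetric system you chose makes that step unavailable. Your automorphism group consists of coordinate permutations together with \emph{whole-column} bit-flips $(\varepsilon p)(\alpha,\xi)=p(\alpha,\xi)+\varepsilon(\alpha)$. With such a flip $\tau$ at column $\alpha$ you only get $\tau q=q$ when $\alpha\notin\supp(q)$, which is why your argument stalls at the observation that conditions \emph{not mentioning} $\alpha$ cannot decide $\dot A_\alpha\in\dot U$. That observation alone is not a contradiction: an ultrafilter name is perfectly entitled to be decided only by conditions that do mention $\alpha$. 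Worse, the natural attempts to push further with your group (e.g.\ swap $\alpha$ with a fresh column $\alpha'$ and flip $\alpha'$) only yield conditions forcing things like $\dot A_\alpha\setminus\dot A_{\alpha'}\in\dot U$, which is an unbounded ``generic'' set and contradicts nothing; no iteration of length $<\kappa$ closes this off. Your group is the one used for Cohen/Halpern--L\'evy-style models, not for Feferman's.

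The paper's proof uses a finer group: flips $\pi_X$ indexed by \emph{arbitrary} sets $X$ of cells of $\kappa\times\kappa$, with $\sF$ generated by the groups $\fix(A)=\{\pi_B\mid B\cap(A\times\kappa)=\varnothing\}$ for $A\in[\kappa]^{<\kappa}$. Given $q\leq p$ with $q\forces^\HS\dot x_\alpha\in\dot U$ and $\alpha$ outside the support of $\dot U$, one picks $\beta$ with $\dom q\subseteq\kappa\times\beta$ and flips only the tail $X=\{\alpha\}\times(\kappa\setminus\beta)$. This $\pi_X$ fixes $q$ (its domain misses $X$), fixes $\dot U$, and sends $\dot x_\alpha$ to a name for the symmetric difference of $\dot x_\alpha$ with $\kappa\setminus\beta$; hence $q$ forces $\dot x_\alpha\cap\pi_X\dot x_\alpha\subseteq\beta$ into $\dot U$, a bounded set, so $\dot U$ is not uniform (and symmetrically for the complement). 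No fusion or density argument over the supports of $\dot U$ is needed --- the single tail-flip does all the work. To repair your write-up, enlarge $\sG$ to include these partial flips (your permutations are then harmless but unnecessary) and replace your closing paragraph with this tail-flip computation; the preservation and $2^\lambda=\lambda^+$ parts of your argument are fine and agree with the paper's.
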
 
\begin{proof}
Let $\PP$ be the forcing $\Add(\kappa,\kappa)$. The conditions of $\PP$ are all partial functions $f\colon \kappa \times \kappa \to 2$, such that $|\dom f| <\kappa$, and the ordering is reverse inclusion. We define $\sG$ to be the group of automorphisms $\pi$ with the following property: There exists some $A\subseteq\kappa\times\kappa$ such that $\pi p(\alpha,\beta)=\chi_A(\alpha,\beta)+p(\alpha,\beta)\pmod 2$, where $\chi_A$ is the characteristic function of $A$. In other words, if we think about $p\in\PP$ as a sequence of $0$/$1$ bits indexed by $\kappa\times\kappa$, $\pi$ ``flips'' the values of the bits whose indices are in $A$. We denote by $\pi_A$ the automorphism $\pi$ defined by $A$ as above.

Note that $\sG$ is in fact an abelian group, since $\pi^{-1}=\pi$ for all $\pi\in\sG$. This immediately implies that any filter of subgroups is closed under conjugation. And so we define for $A\subseteq\kappa$, $\fix(A)=\{\pi_B\mid B\cap(A\times\kappa)=\varnothing\}$, and $\sF$ is the filter induced by $\{\fix(A)\mid A\in[\kappa]^{<\kappa}\}$. We work with the symmetric system $\tup{\PP,\sG,\sF}$.

Denote by $\dot x_\alpha$ the name $\{\tup{p,\check\beta}\mid p(\alpha,\beta)=1\}$.

Suppose that $\dot U\in\HS$ is such that $p\forces^\HS``\dot U\text{ is an ultrafilter on }\check\kappa"$. Let $A$ be such that $\fix(A)\leq\sym(\dot U)$, and we may assume without loss of generality that $\dom p\subseteq A\times\kappa$. Let $\alpha\notin A$, and suppose $q\leq p$ is such that $q\forces^\HS\dot x_\alpha\in\dot U$. Find $\beta$ large enough such that $\dom q\subseteq\kappa\times\beta$, and let $X$ be $\{\alpha\}\times(\kappa\setminus\beta)$. The following holds:
\[q\forces^\HS\dot x_\alpha\in\dot U\iff
\pi_X q\forces^\HS\pi_X\dot x_\alpha\in\pi_X\dot U\iff
q\forces^\HS\pi_X\dot x_\alpha\in\dot U.\]
However this implies that $q\forces^\HS\pi_X\dot x_\alpha\cap\dot x_\alpha\in\dot U$, which is a set bounded in $\kappa$, since $\pi_X(\dot x_\alpha)$ is forced to be the symmetric difference between $\kappa\setminus\beta$ and $\dot x_\alpha$. Therefore $q$ forces that $\dot U$ is not uniform. In particular, no extension of $p$ forces that $\dot U$ is uniform, as the same argument works for assuming $\kappa\setminus\dot x_\alpha\in\dot U$, and therefore $p$ must force that $\dot U$ is not uniform.

To see that $2^\lambda=\lambda^+$, note that by the fact that the forcing is $\kappa$-closed, no bounded subsets are added, and $\GCH$ is preserved below $\kappa$.
\end{proof}
\begin{remark}There are two remarks to be made on the proof above:
\begin{enumerate}
\item
The keen eyed observer will notice that actually the model obtained in the above proof also satisfies $\DC_{<\kappa}$,\footnote{$\DC_\lambda$ is the statement that every $\lambda$-closed tree of height $\lambda$ without leaves has a branch; $\DC_{<\kappa}$ abbreviates $\forall\lambda<\kappa,\DC_\lambda$.} as both the forcing and the $\sF$ are $\kappa$-closed, as follows from \cite[Lemma~2.1]{Karagila:2014}.
\item
It is unclear whether or not there are uniform ultrafilters on $\kappa^+$, or any $\lambda>\kappa$, in the above model. The argument in the proof uses the fact that we focus on $\kappa$ in a significant way, and the argument does not go through when considering $\kappa^+$. On the other hand, the homogeneity of the system makes it quite plausible that there are no uniform ultrafilters on $\kappa^+$ without additional hypotheses (e.g.\ large cardinal assumptions).
\end{enumerate}
\end{remark}
The following lemma appears as \cite[Lemma~21.17]{Jech:ST2003}.
\begin{lemma*}[Jech's Lemma]
Let $\kappa$ be measurable in $M$, and let $N$ be a symmetric extension of $M$ (via a symmetric system $\tup{\PP,\sG,\sF}$, where $\PP$ is a complete Boolean algebra, and an $M$-generic filter $G$). If every symmetric subset of $\PP$ has size $<\kappa$, then $\kappa$ is measurable in $N$.
\end{lemma*}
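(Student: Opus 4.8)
The plan is to start with a $\kappa$-complete free ultrafilter $U$ on $\kappa$ lying in $M$ (which exists since $\kappa$ is measurable there) and to show that it generates a $\kappa$-complete free ultrafilter in $N$. The engine of the argument is the following covering claim: for every $\gamma<\kappa$ and every sequence $\tup{A_\xi:\xi<\gamma}\in N$ of subsets of $\kappa$, there is a single $C\in U$ such that for each $\xi<\gamma$ either $C\subseteq A_\xi$ or $C\cap A_\xi=\varnothing$. The case $\gamma=1$ will give that the generated family is an ultrafilter, and the general case will give $\kappa$-completeness.

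To prove the covering claim, fix a name $\dot a\in\HS$ with $\dot a^G=\tup{A_\xi:\xi<\gamma}$ and put $H=\sym(\dot a)\in\sF$. For each $\xi<\gamma$ form the canonical name $\dot A_\xi$ for the $\xi$-th component; a routine unwinding of the definition of $\pi\dot x$ (using the Symmetry Lemma and $\pi\dot a=\dot a$ for $\pi\in H$) shows $\sym(\dot A_\xi)\supseteq H$, so $H$ also fixes every Boolean value $\|\check\alpha\in\dot A_\xi\|$ for $\alpha<\kappa$ and $\xi<\gamma$. Hence $S=\{\|\check\alpha\in\dot A_\xi\|:\alpha<\kappa,\ \xi<\gamma\}$ is a symmetric subset of $\PP$, and by hypothesis $|S|<\kappa$. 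Now the map sending $\alpha<\kappa$ to the function $\xi\mapsto\|\check\alpha\in\dot A_\xi\|$ belongs to $M$ and takes $\kappa$ into ${}^{\gamma}S$; since $\kappa$ is inaccessible in $M$ and $|S|,\gamma<\kappa$, it has fewer than $\kappa$ values, so it partitions $\kappa$ into fewer than $\kappa$ pieces, all in $M$. By $\kappa$-completeness of $U$ one piece $C$ lies in $U$, and on $C$ each Boolean value $\|\check\alpha\in\dot A_\xi\|$ is a constant $b_\xi$; thus for $\alpha\in C$ we have $\alpha\in A_\xi$ iff $b_\xi\in G$, which is exactly the covering claim ($C\subseteq A_\xi$ when $b_\xi\in G$, and $C\cap A_\xi=\varnothing$ otherwise).

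Next I would define, inside $N$, the family $W=\{A\subseteq\kappa:\exists C\in U\ (C\subseteq A)\}$. Since $U\in M\subseteq N$, this $W$ is definable over $N$ and hence $W\in N$. Applying the covering claim with $\gamma=1$: any $A\subseteq\kappa$ in $N$ receives a $C\in U$ contained in $A$ or in $\kappa\setminus A$, and not both since $U$ is a filter, so $W$ is an ultrafilter; no singleton can contain a nonempty member of $U$, so $W$ is free. For $\kappa$-completeness, let $\tup{A_\xi:\xi<\gamma}\in N$ with $\gamma<\kappa$ and every $A_\xi\in W$, and take the $C\in U$ given by the covering claim. If $b_\xi\notin G$ for some $\xi$ then $C\cap A_\xi=\varnothing$, which together with a $C'\in U$ inside $A_\xi$ forces $C\cap C'=\varnothing\in U$, impossible; so $b_\xi\in G$ and $C\subseteq A_\xi$ for every $\xi$, whence $C\subseteq\bigcap_{\xi<\gamma}A_\xi$ and $\bigcap_{\xi<\gamma}A_\xi\in W$. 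Finally, possessing a $\kappa$-complete free ultrafilter already forces $\kappa$ to remain a regular cardinal in $N$ (collapsing $\kappa$ or lowering its cofinality would let one intersect out the complements of singletons on a small set and land on $\varnothing\in W$), so $\kappa$ is genuinely measurable in $N$.

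The step I expect to be the main obstacle is the $\kappa$-completeness clause, and with it the sequence-version of the covering claim: one must observe that even across all $\alpha<\kappa$ and all $\gamma$ coordinates at once, the relevant Boolean values still form a symmetric subset of $\PP$ — this is precisely where the hypothesis that symmetric subsets have size $<\kappa$ is combined with the inaccessibility of $\kappa$ to bound the number of ``types'' below $\kappa$, and where using a single $\HS$-name $\dot a$ for the whole sequence (rather than treating each $A_\xi$ separately) is essential. Everything else is bookkeeping: checking that the component names $\dot A_\xi$ inherit the symmetry of $\dot a$, that $W$ is genuinely an element of $N$, and the elementary filter manipulations showing that $W$ is a free ultrafilter.
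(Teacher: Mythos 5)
Your proof is correct and is essentially the argument the paper relies on (it defers to Jech's Lemma 21.17 and sketches the same mechanism inside the proof of the subsequent lemma): the Boolean values $\|\check\alpha\in\dot A_\xi\|$ of a symmetric name form a small symmetric subset of the algebra, so partitioning $\kappa$ by Boolean-value type and invoking the $\kappa$-completeness of $U$ in $M$ produces a homogeneous $U$-large piece. Your direct partition by types is just the hands-on version of the paper's phrasing via a small regular subforcing $\QQ$ and the intermediate model $M[G\cap\QQ]$ in which every new set of ordinals is a union of fewer than $\kappa$ ground-model sets, and your device of naming the whole $\gamma$-sequence at once to obtain $\kappa$-completeness is exactly what that sketch implicitly uses.
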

The proof, however, actually proves a stronger lemma as given below.
\begin{lemma}\label{lemma: extending uniform ultrafilters}
Let $\kappa$ be a measurable cardinal, and let $\lambda\geq\kappa$ be a cardinal such that there exists a uniform $\kappa$-complete ultrafilter $U$ on $\lambda$. Suppose that $\tup{\PP,\sG,\sF}$ is a symmetric system such that $\PP$ is a complete Boolean algebra and every symmetric subset of $\PP$ has cardinality $<\kappa$, then $U$ extends uniquely to a $\kappa$-complete uniform ultrafilter on $\lambda$ in the symmetric extension given by the system.
\end{lemma}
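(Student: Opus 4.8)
The strategy is to run the standard argument behind Jech's Lemma, but track uniformity throughout. Work in the ground model $M$ where $\kappa$ is measurable and $U$ is a uniform $\kappa$-complete ultrafilter on $\lambda$. Let $\tup{\PP,\sG,\sF}$ be the symmetric system, $G$ an $M$-generic filter, and $N$ the resulting symmetric extension. First I would argue that $U$ generates an ultrafilter in $N$: given a name $\dot A \in \HS$ for a subset of $\lambda$, consider for each $\xi < \lambda$ the Boolean value $\llbracket \check\xi \in \dot A \rrbracket$. The key point is that since $\sym(\dot A) \supseteq \fix$ of something in $\sF$, the set $\{ \llbracket \check\xi\in\dot A\rrbracket \mid \xi<\lambda\}$, together with the relevant orbit structure, is controlled by a symmetric subset of $\PP$ of size $<\kappa$; by $\kappa$-completeness of $U$ one can partition $\lambda$ into $<\kappa$ pieces on each of which the Boolean value is \emph{constant}, and exactly one such piece lies in $U$. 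Decide whether $\dot A^G$ meets $U$ by asking whether the common Boolean value on the $U$-large piece is in $G$. This defines $\bar U = \{ \dot A^G \mid \exists B\in U, \llbracket \check\xi\in\dot A\rrbracket \text{ is constant on } B \text{ with value in } G\}$; one checks it is a filter extending $U$, and it is an ultrafilter because for any $\dot A$ the value on the $U$-large piece is either in $G$ or its complement is.

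Next I would verify $\kappa$-completeness of $\bar U$ in $N$. Suppose $\tup{\dot A_\alpha \mid \alpha<\gamma}$ with $\gamma<\kappa$ is (a name for) a sequence of sets in $\bar U$; since the sequence itself is in $\HS$ and $\gamma<\kappa$, the union of the $<\kappa$ many symmetric subsets of $\PP$ coding the individual $\dot A_\alpha$ still has size $<\kappa$, so by $\kappa$-completeness of $U$ one finds a single $B\in U$ on which all the $\llbracket\check\xi\in\dot A_\alpha\rrbracket$ are simultaneously constant; intersecting the finitely-or-$\gamma$-many conditions forced into $G$ and using that $\gamma<\kappa \le$ (completeness of $\PP$) one gets $\bigcap_\alpha \dot A_\alpha \in \bar U$. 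Uniformity of $\bar U$ is then immediate: if $\dot A^G \in \bar U$ then $\dot A^G \supseteq B$ for the $U$-large piece $B$ — more precisely $\dot A^G$ contains every $\xi\in B$ whose Boolean value lies in $G$, and since the value is \emph{constant} on $B$ and lies in $G$, all of $B \subseteq \dot A^G$; as $U$ is uniform on $\lambda$, $|B|=\lambda$, hence $|\dot A^G| = \lambda$.

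For uniqueness: any $\kappa$-complete ultrafilter $W$ in $N$ extending $U$ must, by the partition argument above, agree with $\bar U$ on every $\dot A^G$ — the $U$-large constant piece $B$ is in $U\subseteq W$, and $W$ being an ultrafilter decides $\dot A^G\cap B$ the same way $\bar U$ does, because on $B$ membership in $\dot A^G$ is ``all or nothing'' as determined by whether the constant Boolean value is in $G$. Hence $W = \bar U$.

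The main obstacle is the partition step: turning ``$\dot A\in\HS$'' into ``the array of Boolean values $\llbracket\check\xi\in\dot A\rrbracket$ takes $<\kappa$ distinct values, forming pieces one of which is $U$-large''. This is where the hypothesis that every symmetric subset of $\PP$ has size $<\kappa$ is used, exactly as in the proof of Jech's Lemma (\cite[Lemma~21.17]{Jech:ST2003}): one shows that the orbit of $\dot A$ under $\fix(E)$ (for suitable $E$ of size $<\kappa$) is small, so that the $\sG$-action partitions $\lambda$ into few invariant blocks, and $\kappa$-completeness of $U$ picks out the large one. Once this lemma is in hand, the filter, ultrafilter, completeness, uniformity and uniqueness verifications are all routine bookkeeping with Boolean values, and I would present them tersely.
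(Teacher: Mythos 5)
Your proposal is correct and follows essentially the same route as the paper: both run the Jech/L\'evy--Solovay small-forcing argument, using the hypothesis that symmetric subsets of $\PP$ have size $<\kappa$ to see that the Boolean values $\llbracket\check\xi\in\dot A\rrbracket$ (which are pointwise fixed by $\sym(\dot A)\in\sF$, hence form a symmetric subset) take fewer than $\kappa$ values, so that every set in the extension either contains or is disjoint from a ground-model set in $U$; uniformity and uniqueness then fall out exactly as you describe. The paper phrases this via a regular subforcing $\QQ$ of size $\mu<\kappa$ and the decomposition of each new set of ordinals into $\le\mu$ ground-model pieces, which is the same decomposition as your constancy classes.
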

\begin{proof}
The only part missing from the proof of Jech's Lemma is the uniformity of the extension. Note that since there is a uniform ultrafilter on $\lambda$ which is $\kappa$-complete, $\cf \lambda \geq \kappa$. This remains true in the symmetric extension. The following is a sketch of the proof of Jech's Lemma. 

Let $\dot X\in\HS$ be a symmetric name for a subset of $\lambda$. There is some $\QQ$, a regular subforcing of $\PP$ of cardinality $\mu<\kappa$, such that $\dot X$ can be seen as a $\QQ$-name. In the generic extension by $\QQ$, there is a unique extension of $U$ to a $\kappa$-complete and uniform ultrafilter on $\lambda$; indeed, in this generic extension every set of ordinals $Y$ is a union of at most $\mu$ sets from the ground model, $\{A_i \mid i < \mu\}$. So $Y$ belongs to the extension of $U$ if and only if one of them belongs to $U$.

Let $\widetilde{U}$ be the $\PP$-name for the union of the unique extensions of $U$ in each generic extension by a regular subforcing of $\PP$ generated by a symmetric subset of $\PP$. 

It is routine to verify that $\widetilde{U}$ is stable under all the automorphisms in $\sG$ and is indeed in $\HS$. Moreover as every set of ordinals in the symmetric extension was introduced by a small subforcing, we get that $1\forces^\HS\widetilde{U}\text{ is an ultrafilter on }\check\lambda$.

Finally, if $\dot X$ is a name for a set in $\widetilde{U}$, then there is some intermediate extension where $\dot X$ is in the unique extension of $U$ to a uniform ultrafilter on $\lambda$, and therefore $\dot X$ is a name for a set of size $\lambda$, so indeed $1\forces^\HS\widetilde{U}\text{ is a uniform ultrafilter on }\check\lambda$.
\end{proof}
In turn, this brings us to the following corollary.
\begin{corollary}
Under the notation and conditions on $\kappa$ and $\lambda$ of the previous lemma, if $\mu<\kappa$ is some regular cardinal, taking the symmetric extension given by the generalized Feferman construction for $\mu$, any $\kappa$-complete uniform ultrafilter on $\lambda$ extends to a $\kappa$-complete uniform ultrafilter on $\lambda$ in the symmetric extension.\qed
\end{corollary}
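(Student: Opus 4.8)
The plan is to verify that the symmetric system underlying the generalized Feferman construction for $\mu$ satisfies, after passing to its Boolean completion, the hypotheses of \autoref{lemma: extending uniform ultrafilters}, and then to apply that lemma with the given ultrafilter in the role of $U$. I would begin by recalling that a measurable cardinal is either $\aleph_0$ or strongly inaccessible; since $\mu<\kappa$ is an infinite regular cardinal, $\kappa$ is uncountable, hence a strong limit, and that single observation is essentially all the arithmetic the argument needs.

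Next, let $\tup{\PP,\sG,\sF}$ be the symmetric system from the proof of \autoref{thm:g-feferman} with parameter $\mu$, so $\PP=\Add(\mu,\mu)$ and $\sF$ is generated by the groups $\fix(A)$ for $A\in[\mu]^{<\mu}$. Every condition of $\PP$ is a subset of the set $(\mu\times\mu)\times2$, which has size $\mu$, so $|\PP|\leq 2^\mu<\kappa$. I would then pass to the Boolean completion $\BB=\mathrm{RO}(\PP)$: each $\pi\in\sG$ extends uniquely to an automorphism of $\BB$, the filter $\sF$ may be taken unchanged (its generators are described purely combinatorially, independently of whether $\sG$ acts on $\PP$ or on $\BB$), and $\tup{\BB,\sG,\sF}$ produces the same symmetric extension as $\tup{\PP,\sG,\sF}$. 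Since $\BB$ is a family of regular open subsets of $\PP$, we get $|\BB|\leq 2^{|\PP|}\leq 2^{2^\mu}<\kappa$, again because $\kappa$ is a strong limit. In particular \emph{every} subset of $\BB$, symmetric or not, has cardinality $<\kappa$, so the size requirement in \autoref{lemma: extending uniform ultrafilters} is met trivially.

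Finally, let $W$ be any $\kappa$-complete uniform ultrafilter on $\lambda$ in $V$ (such a $W$ exists by the standing hypothesis on $\lambda$). Applying \autoref{lemma: extending uniform ultrafilters} to $\tup{\BB,\sG,\sF}$ with $U:=W$ --- $\kappa$ is measurable, $\lambda\geq\kappa$, $W$ is a uniform $\kappa$-complete ultrafilter on $\lambda$, $\BB$ is a complete Boolean algebra, and all its symmetric subsets are small --- yields that $W$ extends uniquely to a $\kappa$-complete uniform ultrafilter on $\lambda$ in the symmetric extension, which is precisely the one given by the generalized Feferman construction for $\mu$. The only step requiring genuine attention is the interchange of $\PP$ with its Boolean completion within the symmetric framework, i.e.\ checking that $\tup{\BB,\sG,\sF}$ computes the same $\HS$-extension; this is standard, so I would only sketch it. Everything else reduces to the strong-limit bound above, which is why the corollary can be recorded with no proof.
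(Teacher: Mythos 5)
Your proposal is correct and is essentially the argument the paper intends by stating the corollary with no proof: the forcing $\Add(\mu,\mu)$ and hence its Boolean completion have size $<\kappa$ because an uncountable measurable $\kappa$ is a strong limit, so every (symmetric) subset is small and \autoref{lemma: extending uniform ultrafilters} applies directly. The only point you rightly flag as needing care---replacing the poset by its completion while keeping the same group, filter, and symmetric extension---is indeed standard and is implicitly assumed throughout the paper.
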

\section{Symmetric collapses}
\begin{definition}
Let $\kappa\leq\lambda$ be two infinite cardinals. The \textit{symmetric collapse} is the symmetric system $\tup{\PP,\sG,\sF}$ defined as follows:
\begin{itemize}
\item $\PP=\Col(\kappa,<\lambda)$, so a condition in $\PP$ is a partial function $p$ with domain $\{\tup{\alpha,\beta}\mid\kappa<\alpha<\lambda,\beta<\kappa\}$ such that $p(\alpha,\beta)<\alpha$ for all $\alpha$ and $\beta$, $\supp(p)=\{\alpha<\lambda\mid\exists\beta,\ \tup{\alpha,\beta}\in\dom p\}$ is bounded below $\lambda$ and $|p|<\kappa$.
\item $\sG$ is the group of automorphisms $\pi$ such that there is a sequence of permutations $\vec\pi=\tup{\pi_\alpha\mid\kappa<\alpha<\lambda}$ such that $\pi_\alpha$ is a permutation of $\alpha$ satisfying $\pi p(\alpha,\beta)=\pi_\alpha(p(\alpha,\beta))$ (note that $p(\alpha,\beta)$ is an ordinal below $\alpha$).
\item $\sF$ is the normal filter of subgroups generated by $\fix(E)$ for bounded $E\subseteq\lambda$, where $\fix(E)$ is the group $\{\pi\mid\forall\alpha\in E,\pi p(\alpha,\beta)=p(\alpha,\beta)\}$, i.e.\ if $\pi$ is induced by $\vec\pi$, then $\pi\in\fix(E)$ if and only if $\pi_\alpha=\id$ for all $\alpha\in E$.
\end{itemize}
\end{definition}
\begin{theorem}[Folklore]\label{thm:folk-measure-preserving}
Assume $\GCH$ holds, and let $\kappa\leq\lambda$ be two cardinals. The symmetric extension given by the symmetric collapse satisfies the following properties:
\begin{enumerate}
\item The symmetric system is strongly homogeneous.
\item $\lambda=\kappa^+$, and if $\lambda$ was regular, then it remains regular.
\item If $\mu<\kappa$, then no new subsets of $\mu$ were added.
\item Every symmetric subset of $\PP$ has cardinality $<\lambda$, in particular if $\lambda$ was measurable it remains measurable.\qed
\end{enumerate}
\end{theorem}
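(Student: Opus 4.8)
The plan is to verify the four clauses one at a time; each is a standard fact about the Lévy collapse, so most of the work is matching them to the particular $\sG$ and $\sF$ from the definition of the symmetric collapse. I expect clause~(1) to be the only genuine obstacle.

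For~(1), fix a condition $p$ and let $\gamma<\lambda$ be a bound for $\supp p$. I would take as the witnessing subgroup $\sym_\sG(p)$ itself; this lies in $\sF$ because it contains $\fix(\supp p)$ and $\supp p$ is bounded. Given $q,r\leq p$, I must produce $\pi\in\sym_\sG(p)$ with $\pi q\parallel r$, and I would build $\pi$ from a sequence $\vec\pi=\tup{\pi_\alpha\mid\kappa<\alpha<\lambda}$ chosen coordinate by coordinate. On each coordinate $\alpha$ the pieces $q_\alpha$ and $r_\alpha$ are partial functions into $\alpha$ of size $<\kappa$ that agree with $p_\alpha$ on the part of $\alpha$ already decided by $p$, and the only restriction on $\pi_\alpha$ imposed by membership in $\sym_\sG(p)$ concerns precisely that decided part; since $\kappa$ (and $\alpha$) dwarf the $<\kappa$ places involved, there is room to choose $\pi_\alpha$ moving the undecided part of $q_\alpha$ clear of $r_\alpha$, so that $\pi_\alpha q_\alpha$ and $r_\alpha$ become compatible. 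Then $\pi=\bigcup\vec\pi$ is the required automorphism. The delicate point to spell out is exactly this simultaneous, coordinatewise rearrangement — that the undecided parts can be made to dodge each other while keeping $\pi$ inside $\sym_\sG(p)$, hence inside $\sF$.

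For~(2) and~(3), I would first note that in the case of interest $\lambda$ is regular (at any rate $\cf\lambda\geq\kappa$ suffices), so that the union of a decreasing $<\kappa$-sequence of conditions again has bounded support; hence $\PP$ is $<\kappa$-closed. This yields~(3) at once — no new $<\kappa$-sequences of ordinals, so in particular no new subsets of any $\mu<\kappa$ — and the preservation of all cardinals $\leq\kappa$. A density argument shows that for each $\alpha$ with $\kappa<\alpha<\lambda$ the generic object restricted to the $\alpha$th coordinate is a surjection $\kappa\to\alpha$, so $|\alpha|=\kappa$ in the extension; and a $\Delta$-system argument using $\GCH$ gives the $\lambda$-chain condition, so $\lambda$ itself is not collapsed and remains regular if it was. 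Putting these together, $\lambda=\kappa^{+}$ in the symmetric extension.

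For~(4), let $A\subseteq\PP$ be symmetric, witnessed by $\fix(E)$ with $E$ bounded. If some $p\in A$ had a coordinate $\alpha\in\supp p\setminus E$, then the automorphisms in $\fix(E)$ realize every permutation of $\alpha$ on that coordinate, and no permutation can fix all the values $p$ takes there, so some automorphism in $\fix(E)$ would move $p$ — contradiction. Hence every $p\in A$ is supported inside $E$, so $A$ is contained in the set of conditions with support $\subseteq E$, which under $\GCH$ has size $<\lambda$ (using, when $\lambda$ is measurable, that it is inaccessible). Finally, replacing $\PP$ by its Boolean completion, which carries the same group, filter, and symmetric extension, and invoking Jech's Lemma as quoted above, a measurable $\lambda$ remains measurable.
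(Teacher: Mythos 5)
The paper states this theorem as folklore and gives no proof, so your argument can only be judged on its own terms. Clauses (2)--(4) are essentially right: (3) follows from $<\kappa$-closure as you say; for (2) you should add that each collapsing function has a symmetric name (its canonical name is fixed by $\fix(\{\alpha\})$), since the collapse of the ordinals in $(\kappa,\lambda)$ must be witnessed inside the symmetric extension and not merely in $V[G]$; and your argument for (4), that a symmetric set witnessed by $\fix(E)$ consists of conditions supported in $E$, is the standard one.

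The genuine gap is in (1), precisely at the point you yourself flagged as delicate. The automorphisms of this system act on the \emph{values}: $\pi p(\alpha,\beta)=\pi_\alpha(p(\alpha,\beta))$ with $\pi_\alpha$ a permutation of $\alpha$, so $\dom(\pi q)=\dom q$. Hence you cannot ``move the undecided part of $q_\alpha$ clear of $r_\alpha$'' --- dodging is a domain-permutation manoeuvre. Compatibility of $\pi q$ with $r$ requires $\pi_\alpha(q(\alpha,\beta))=r(\alpha,\beta)$ at \emph{every} $(\alpha,\beta)\in\dom q\cap\dom r$, i.e.\ you must match values on the overlap rather than avoid it, and no permutation can do this if $q$ takes equal values at two points of the common domain (outside $\dom p$) where $r$ takes distinct values: for $p=1$, $q=\{((\alpha,0),1),((\alpha,1),1)\}$ and $r=\{((\alpha,0),1),((\alpha,1),2)\}$, any $\pi\in\sG$ would need $\pi_\alpha(1)=1$ and $\pi_\alpha(1)=2$ simultaneously. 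So the construction of $\pi$ breaks down, and no repair within the value-permutation group is possible. Your argument is the correct one for the larger group that also includes, for each $\alpha$, a permutation $\sigma_\alpha$ of the $\kappa$-coordinates of that column (choose $\sigma_\alpha$ fixing $\dom p_\alpha$ pointwise and moving $\dom q_\alpha\setminus\dom p_\alpha$ off $\dom r_\alpha$); that is the group for which strong homogeneity of the collapse is standard, and there your choice of witnessing subgroup, namely one containing $\fix(\supp p)\in\sF$, is fine. As written for the group defined in the paper, clause (1) is not established.
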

Working over $L$, the case of $\kappa=\aleph_0$ and $\lambda$ a limit cardinal was studied by John Truss in \cite{Truss:1974} as an extension of the case where $\lambda$ was strongly inaccessible, which was studied by Robert Solovay in \cite{Solovay:1970}.

It is well known that the existence of a free ultrafilter on $\aleph_0$ implies that there exists a non-measurable set of reals. Solovay's model where one takes the symmetric collapse of an inaccessible is a model where all sets of reals are Lebesgue measurable. But we can prove a more general theorem on symmetric collapses.
\begin{theorem}\label{thm:symm-coll-and-uf}
Suppose that $\kappa$ is regular and $\lambda\geq\kappa$. Let $M$ be the symmetric extension obtained by the symmetric collapse $\Col(\kappa,<\lambda)$. In $M$ there are no uniform ultrafilters on $\kappa$.
\end{theorem}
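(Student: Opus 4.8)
The plan is to prove the (equivalent) statement that no condition forces ``$\dot U$ is a uniform ultrafilter on $\check\kappa$'', for any name $\dot U\in\HS$; since every ultrafilter of $M$ is the interpretation of some element of $\HS$, this yields the theorem. So fix such a $\dot U$ and a condition $p$ with $p\forces^\HS$``$\dot U$ is an ultrafilter on $\check\kappa$''. It suffices to find, below any such $p$, a condition $q$ with $q\forces^\HS$``$\dot U$ is not uniform'': for if some $p^\ast$ forced $\dot U$ to be a uniform ultrafilter, then in particular $p^\ast\forces^\HS$``$\dot U$ is an ultrafilter'', and the promised $q\leq p^\ast$ would contradict $q\leq p^\ast$. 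Fix a bounded $E_0\subseteq\lambda$ with $\fix(E_0)\leq\sym(\dot U)$, enlarged so that $\supp(p)\subseteq E_0$. Since $\Col(\kappa,<\lambda)$ is $\kappa$-closed, $\kappa$ is still a cardinal in $M$, so it is enough to force that $\dot U$ contains a subset of $\check\kappa$ that is bounded in $\check\kappa$.

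The key idea is to encode a subset of $\kappa$ using $\kappa$ many distinct collapse coordinates, so that the condition $q$ — whose support has size $<\kappa$ — is forced to be blind to a final segment of them. Choose distinct ordinals $\langle\alpha_\xi\mid\xi<\kappa\rangle$ with $\kappa<\alpha_\xi<\lambda$ and $\alpha_\xi\notin E_0$, such that $\Gamma=\{\alpha_\xi\mid\xi<\kappa\}$ is bounded in $\lambda$; this is possible since $\lambda>\kappa$ and $E_0$ is bounded. For each $\xi$ fix a partition $\alpha_\xi=S_\xi\sqcup T_\xi$ with $|S_\xi|=|T_\xi|=|\alpha_\xi|$ and a permutation $\rho_\xi$ of $\alpha_\xi$ with $\rho_\xi[S_\xi]=T_\xi$. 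Writing $g_\alpha$ for the generic collapsing map added at coordinate $\alpha$, let $\dot B\in\HS$ be a name with $1\forces^\HS\dot B=\{\xi<\check\kappa\mid g_{\alpha_\xi}(\xi)\in\check S_\xi\}$; one checks directly that $\fix(\Gamma)\leq\sym(\dot B)$, so $\dot B\in\HS$ (the only names occurring in $\dot B$ are canonical). As $p\forces^\HS$``$\dot U$ is an ultrafilter'', we have $p\forces^\HS\dot B\in\dot U\lor(\check\kappa\setminus\dot B)\in\dot U$, so some $q\leq p$ decides one of these disjuncts; since replacing every $S_\xi$ by $T_\xi$ turns $\dot B$ into a name for $\check\kappa\setminus\dot B$, we may assume $q\forces^\HS\dot B\in\dot U$.

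Now $|\supp(q)|<\kappa$ and $\kappa$ is regular in $V$, so there is $\beta<\kappa$ with $\alpha_\xi\notin\supp(q)$ for every $\xi\geq\beta$. Let $\pi\in\sG$ be the automorphism with $\pi_{\alpha_\xi}=\rho_\xi$ for $\beta\leq\xi<\kappa$ and $\pi_\gamma=\id$ for all other coordinates $\gamma$. The coordinates actually moved by $\pi$ all lie in $\Gamma$, which is disjoint from $E_0$, so $\pi\in\fix(E_0)$ and hence $\pi\dot U=\dot U$; and $\pi$ is the identity on $\supp(q)$, so $\pi q=q$. A computation with the definition of $\dot B$ shows that $\pi\dot B$ is forced to name $\{\xi<\check\kappa\mid g_{\alpha_\xi}(\xi)\in\pi_{\alpha_\xi}[S_\xi]\}$, which agrees with $\dot B$ below $\check\beta$ and is disjoint from $\dot B$ from $\check\beta$ onwards (there $\pi_{\alpha_\xi}[S_\xi]=T_\xi=\alpha_\xi\setminus S_\xi$); thus $1\forces^\HS\dot B\cap\pi\dot B\subseteq\check\beta$. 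On the other hand, the relativized Symmetry Lemma turns $q\forces^\HS\dot B\in\dot U$ into $\pi q\forces^\HS\pi\dot B\in\pi\dot U$, i.e.\ $q\forces^\HS\pi\dot B\in\dot U$; together with $q\forces^\HS\dot B\in\dot U$ and the fact that $\dot U$ is forced to be a filter, this gives $q\forces^\HS\dot B\cap\pi\dot B\in\dot U$. Since $\dot B\cap\pi\dot B$ is forced to be contained in $\check\beta$, hence bounded in $\check\kappa$, we conclude $q\forces^\HS$``$\dot U$ is not uniform'', as required.

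The only real obstacle is the tension between the two demands on the automorphism $\pi$: it must fix both $\dot U$ and the condition $q$, yet move the witnessing set $\dot B$ essentially onto its complement. What resolves it is spreading $\dot B$ across $\kappa$ many coordinates outside $E_0$, so that $q$, having support of size $<\kappa$, is automatically trivial on a cofinal set of these coordinates; after that the argument is routine bookkeeping with the Symmetry Lemma and uses nothing beyond the regularity of $\kappa$.
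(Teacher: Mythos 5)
Your proof is correct. It follows the same overall strategy as the paper's --- build a symmetric name for a subset of $\check\kappa$ together with an automorphism that fixes both $\dot U$ and the deciding condition $q$ while moving the name almost entirely off itself --- but the implementation is genuinely different, and the difference is substantive. The paper works inside a \emph{single} column $\beta$, with $\dot x=\set{\tup{p,\check\xi}\mid p(\beta,\xi)\text{ is even}}$ and an automorphism swapping the parity of all values above the bound $\gamma$ of the values $q$ mentions in that column; the resulting intersection $\dot x\cap\pi\dot x$ is then the $g_\beta$-preimage of the even ordinals $\leq\gamma$. Since a generic collapsing function attains every fixed value below $\beta$ cofinally often in $\kappa$ (a routine density argument), that preimage generically has size $\kappa$, so the single-column version does not literally produce a small set in $\dot U$: the analogy with the $\Add(\kappa,\kappa)$ argument of \autoref{thm:g-feferman} breaks exactly here, because there the automorphisms act on cells indexed by rows below $\kappa$, of which $q$ constrains only boundedly many, whereas here they act on \emph{values}, each of which has unbounded preimage. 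Your diagonal coding $\dot B$, spread over $\kappa$ many columns $\alpha_\xi$ with one relevant cell $(\alpha_\xi,\xi)$ per column, avoids this: since $|\supp(q)|<\kappa$ and $\kappa$ is regular, a full tail of the columns $\alpha_\xi$ is untouched by $q$, the swap $S_\xi\leftrightarrow T_\xi$ can be performed wholesale there, and $\dot B\cap\pi\dot B$ really is forced into $\check\beta$. What your version costs is the use of $\kappa$ many columns, hence the assumption $\lambda>\kappa$; the degenerate case $\lambda=\kappa$ is in any case handled by \autoref{thm:g-feferman} via \autoref{rem:collapse-and-feferman}. In short: same skeleton, but your rearrangement of where the coding lives is what actually makes the bounded-intersection step go through.
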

\begin{proof}
The argument is similar to the generalized Feferman model, as described in section~\ref{section:gen-fef}. Suppose that $\dot U\in\HS$ is such that $p_0\forces\dot U\text{ is a uniform ultrafilter on }\check\kappa$. Let $\alpha<\lambda$ be large enough such that $p_0$ is bounded below $\alpha$, i.e.\ only ordinals below $\alpha$ appear in the domain of $p_0$, and $\fix(\alpha)\leq\sym(\dot U)$, and take $\beta>\alpha$. Let $\dot x=\{\tup{p,\check\xi}\mid p(\beta,\xi)\text{ is even}\}$, easily $\dot x$ is symmetric. Suppose that $q\leq p_0$ decides the truth value of $\dot x\in\dot U$.

Consider the involution $\pi$ which only permutes the $\beta$th coordinate in the following way: find $\gamma$ large enough such that $q$ does not mention any ordinal above $\gamma$ in its $\beta$th coordinate; then partition the interval $(\gamma,\beta)$ into pairs $\{\zeta,\zeta'\}$ where exactly one of these is even; finally, define $\pi p(\beta,\xi)=\zeta'$ if and only if $p(\beta,\xi)=\zeta$ when $\{\zeta,\zeta'\}$ is in the above partition (everywhere else $\pi$ is the identity). In other words, define an involution which switches the parity of all large enough ordinals, only on the $\beta$th coordinate of the condition such that $\pi q=q$.

This readily implies that $\pi\dot U=\dot U$, and that $\pi q=q$. From the definition of $\dot x$, and as in the proof of \autoref{thm:g-feferman}, we have that \[q\forces|\dot x\cap\pi\dot x|,|\check\kappa\setminus(\dot x\cup\pi\dot x)|<\check\kappa,\]
and therefore $q$ cannot force that $\dot U$ is uniform, as one of these has to be in $\dot U$.
\end{proof}
\begin{corollary}
Jech's model, obtained by taking the symmetric collapse where $\kappa=\aleph_0$ and $\lambda$ is a measurable cardinal, satisfies that $\aleph_0$ is not measurable, but $\aleph_1$ is measurable.\qed
\end{corollary}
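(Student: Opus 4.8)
The plan is to obtain both clauses directly from \autoref{thm:symm-coll-and-uf} and \autoref{thm:folk-measure-preserving}, once the ground model is arranged to satisfy their hypotheses. The only bookkeeping point is the standing assumption of $\GCH$ in \autoref{thm:folk-measure-preserving}: this costs nothing, since the statement is one of relative consistency, so I would first replace $V$ by $L[U]$ for a normal measure $U$ on $\lambda$ (which preserves the measurability of $\lambda$ and satisfies $\GCH$), or alternatively force $\GCH$ by a standard iteration that keeps $\lambda$ measurable. With such a $V$ fixed, let $M$ be the symmetric extension of $V$ given by the symmetric collapse with $\kappa=\aleph_0$ and $\lambda$ as in the statement.

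For the first clause, that $\aleph_0$ is not measurable in $M$: since $\aleph_0$ is regular and $\lambda\geq\aleph_0$, \autoref{thm:symm-coll-and-uf} applies and yields that $M$ has no uniform ultrafilter on $\aleph_0$. The one extra observation needed is that, provably in $\ZF$, every free ultrafilter on $\omega$ is uniform: a free ultrafilter contains no singleton, hence --- being an ultrafilter --- no finite set at all, so each of its members is infinite and therefore of size $|\omega|$. Thus $M$ has no free ultrafilter on $\aleph_0$, which is exactly the assertion that $\aleph_0$ fails to be measurable there.

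For the second clause, that $\aleph_1$ is measurable in $M$: \autoref{thm:folk-measure-preserving}(2) gives that $\lambda=\aleph_0^{+}=\aleph_1$ in $M$ (using that $\lambda$, being measurable, is regular, so it survives as a cardinal), while \autoref{thm:folk-measure-preserving}(4) gives that $\lambda$ stays measurable in $M$. Putting these together, $\aleph_1^{M}=\lambda$ carries a $\lambda$-complete --- in particular $\aleph_1$-complete --- free ultrafilter, so it is measurable. I do not anticipate any real difficulty: the substance is already packaged in the two cited theorems, and the only mild care needed is the $\GCH$ reduction above together with the elementary upgrade from ``no uniform ultrafilter on $\omega$'' to ``no free ultrafilter on $\omega$''.
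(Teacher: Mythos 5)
Your proof is correct and follows exactly the route the paper intends for this corollary (stated with \verb|\qed| precisely because it is immediate from \autoref{thm:symm-coll-and-uf} together with parts (2) and (4) of \autoref{thm:folk-measure-preserving}, plus the observation that free equals uniform on $\omega$). The extra care about $\GCH$ is harmless but not really needed, since for inaccessible $\lambda$ the Levy collapse is $\lambda$-cc and both relevant conclusions of \autoref{thm:folk-measure-preserving} go through without it.
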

\begin{remark}\label{rem:collapse-and-feferman}
Note that if $\lambda=\kappa$ and $\kappa$ is regular, then the symmetric collapse is exactly the generalized Feferman construction for $\kappa$. If $\lambda=\kappa^+$, however, the symmetric collapse is a mild variant of the generalized Feferman model, as $\Col(\kappa,\kappa^+)$ is equivalent to $\Add(\kappa,\kappa^+)$.
\end{remark}
\section{Uniformity spectra}
\subsection{Singular limitations}
\begin{theorem}[$\ZF$]\label{thm:cofinality-reflection}
Suppose that $\lambda$ is singular and $\cf(\lambda)=\kappa$. If there is a uniform ultrafilter on $\lambda$, then there is a uniform ultrafilter on $\kappa$.
\end{theorem}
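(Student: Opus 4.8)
The plan is to obtain a uniform ultrafilter on $\kappa$ as the pushforward of the given one along a suitable ``layer'' map $g\colon\lambda\to\kappa$. First I would fix a cofinal function $f\colon\kappa\to\lambda$ --- which exists precisely because $\cf(\lambda)=\kappa$ --- and set $g(\xi)$ equal to the least $\alpha<\kappa$ with $\xi<f(\alpha)$; cofinality of $f$ makes $g$ a total function on $\lambda$. Writing $U$ for a uniform ultrafilter on $\lambda$, I would then consider $U'=\{A\subseteq\kappa\mid g^{-1}(A)\in U\}$. Since $g^{-1}$ commutes with complements and intersections and $g^{-1}(\kappa)=\lambda\in U$ while $g^{-1}(\varnothing)=\varnothing\notin U$, the family $U'$ is an ultrafilter on $\kappa$; this part is routine and choice-free.

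The crux is to check that $U'$ is uniform, and the reason for building $g$ from a cofinal map is that it lets me replace a cardinality estimate on a union of small sets --- which would be delicate in $\ZF$ --- by the trivial fact that a subset of an ordinal below the cardinal $\lambda$ has cardinality below $\lambda$. Suppose $A\in U'$ and, toward a contradiction, that $A$ is bounded in $\kappa$, say $A\subseteq\delta$ with $\delta<\kappa$. If $g(\xi)\in A$ then $g(\xi)<\delta$, hence $\xi<f(g(\xi))\leq\eta$, where $\eta=\sup\{f(\alpha)\mid\alpha<\delta\}$. As $\delta<\kappa=\cf(\lambda)$, the set $\{f(\alpha)\mid\alpha<\delta\}$ is not cofinal in $\lambda$, so $\eta<\lambda$, and since $\lambda$ is a cardinal we get $|g^{-1}(A)|\leq|\eta|<\lambda$. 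But $g^{-1}(A)\in U$ and $U$ is uniform, so $|g^{-1}(A)|=\lambda$, a contradiction. Hence every member of $U'$ is unbounded in $\kappa$.

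To conclude, I would use that $\kappa=\cf(\lambda)$ is regular (in $\ZF$, $\cf(\cf(\lambda))=\cf(\lambda)$; also $\kappa\geq\omega$ since singular cardinals are limit cardinals), so an unbounded $A\subseteq\kappa$ cannot have cardinality $<\kappa$ --- a bijection of $|A|$ onto $A$ would be a cofinal map of the ordinal $|A|$ into $\kappa$ --- whence $|A|=\kappa$. Thus every $A\in U'$ satisfies $|A|=\kappa$, i.e.\ $U'$ is a uniform ultrafilter on $\kappa$, as desired. The one spot that needs care is the middle paragraph: one must argue via ``$g^{-1}(A)$ is a bounded subset of $\lambda$'' rather than via ``$g^{-1}(A)$ is a union of fewer than $\kappa$ sets of size $<\lambda$'', because without choice the latter need not be small; the map $g$ is chosen exactly so that the first formulation is available, and I do not expect any genuine obstacle beyond this bookkeeping.
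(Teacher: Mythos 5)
Your proof is correct and follows essentially the same route as the paper's: both push $U$ forward along the ``layer'' map induced by a cofinal sequence (the paper phrases it via the intervals $X_\alpha=[\lambda_\alpha,\lambda_{\alpha+1})$, you via $g(\xi)=\min\{\alpha\mid\xi<f(\alpha)\}$), and both get uniformity from the regularity of $\cf(\lambda)$ together with the observation that the preimage of a bounded set is a bounded, hence small, subset of $\lambda$. Nothing further is needed.
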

\begin{proof}
Let $U$ be a uniform ultrafilter on $\lambda$, fix a cofinal set $\{\lambda_\alpha\mid\alpha<\kappa\}$, and let $X_\alpha$ be the interval $[\lambda_\alpha,\lambda_{\alpha+1})$.\footnote{It does not matter if $\lambda_0=0$, but it is convenient to assume that it is.} Define $U_*$ as follows,\[A\in U_*\iff A\subseteq\kappa\text{ and }\bigcup\{X_\alpha\mid\alpha\in A\}\in U.\]
The claim that $U_*$ is an ultrafilter on $\kappa$ follows from the fact that $U$ is an ultrafilter, and that the complement of a union of $X_\alpha$'s is itself a union of $X_\alpha$'s. To see that $U_*$ is uniform, note that $\kappa$ is regular---$\cf(\lambda)$ is always regular, even without assuming choice---so every small set $A$ is bounded, but then $\bigcup\{X_\alpha\mid\alpha\in A\}$ is a bounded subset of $\lambda$ and therefore has small cardinality and so is not in $U$, so $A\notin U_*$.
\end{proof}
\begin{remark}
It seems tempting to claim that the above proof should work for every unbounded set, thus it is enough to require that there is an increasing and cofinal function from $\kappa$ to $\lambda$. However, if $\kappa$ happened to be singular (note that $\cf(\kappa)=\cf(\lambda)$), then the claim that a small set is bounded is no longer true, so we cannot obtain uniformity. So a uniform ultrafilter on $\aleph_{\omega_\omega}$ implies a uniform ultrafilter must exist on $\aleph_0$, but not necessarily---it seems---on $\aleph_\omega$ (see \autoref{question:referee}).
\end{remark}
\begin{corollary}
Let $\lambda$ be a $\lambda^{+\omega}$-strongly compact cardinal. Taking the symmetric collapse with $\kappa=\aleph_0$ produces a model where for $0<n<\omega$, there is a uniform ultrafilter on $\aleph_n$, but there is no uniform ultrafilter on $\aleph_\omega$.
\end{corollary}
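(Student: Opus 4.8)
The plan is to run the symmetric collapse $\langle\PP,\sG,\sF\rangle$ with $\PP=\Col(\aleph_0,<\lambda)$ (replacing $\PP$ by its Boolean completion wherever a complete Boolean algebra is needed), fix a $V$-generic $G$, and let $M$ be the resulting symmetric extension; throughout we assume $\GCH$ in $V$, so that \autoref{thm:folk-measure-preserving} is available. The first step is to pin down the cardinals of $M$: since $\lambda$ is strongly compact it is inaccessible, so $\PP$ is $\lambda$-c.c.\ of size $\lambda$ and every cardinal and cofinality $\geq\lambda$ is preserved; as $V\subseteq M$ no new cardinals appear in this range, and together with $\lambda=\aleph_1^M$ from \autoref{thm:folk-measure-preserving}(2) this yields $\lambda^{+n}=\aleph_{n+1}^M$ for all $n<\omega$, hence $\lambda^{+\omega}=\aleph_\omega^M$, with $\cf^M(\aleph_\omega^M)=\omega$ (witnessed by $\langle\lambda^{+n}\mid n<\omega\rangle\in V\subseteq M$).

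For the negative half I would argue as follows. By \autoref{thm:symm-coll-and-uf}, applied with $\kappa=\aleph_0$, there is no uniform ultrafilter on $\aleph_0$ in $M$; since every free ultrafilter on $\aleph_0$ is automatically uniform, this says precisely that $\aleph_0$ is not measurable in $M$. As $\aleph_\omega^M$ is singular of cofinality $\aleph_0$ in $M$, \autoref{thm:cofinality-reflection} (read inside $M$) shows that a uniform ultrafilter on $\aleph_\omega^M$ would produce a uniform ultrafilter on $\aleph_0$, which has just been excluded. Hence $M$ has no uniform ultrafilter on $\aleph_\omega$.

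For the positive half, fix $n$ with $0<n<\omega$; the target is $\aleph_n^M=\lambda^{+(n-1)}$. Because $\lambda$ is $\lambda^{+\omega}$-strongly compact it is in particular measurable and $\lambda^{+(n-1)}$-strongly compact, so in $V$ the filter of co-bounded subsets of the regular cardinal $\lambda^{+(n-1)}$---which is $\lambda^{+(n-1)}$-complete, hence $\lambda$-complete---extends to a $\lambda$-complete ultrafilter $U_n$ on $\lambda^{+(n-1)}$, and $U_n$ is uniform because it contains all co-bounded sets (for $n=1$ this is just the measurability of $\lambda$, which also follows from \autoref{thm:folk-measure-preserving}(4)). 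Now apply \autoref{lemma: extending uniform ultrafilters}, taking its measurable cardinal to be $\lambda$, its base cardinal to be $\lambda^{+(n-1)}$, its ultrafilter to be $U_n$, and its symmetric system to be the (completed) symmetric collapse: by \autoref{thm:folk-measure-preserving}(4) every symmetric subset of $\PP$ has cardinality $<\lambda$, so $U_n$ extends to a $\lambda$-complete uniform ultrafilter on $\lambda^{+(n-1)}=\aleph_n^M$ in $M$. As $n$ was arbitrary, every $\aleph_n$ with $0<n<\omega$ carries a uniform ultrafilter in $M$.

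There is no single deep step here; the work lies in lining the hypotheses up correctly, and the point to watch is the division of labour between the two uses of the large-cardinal assumption. The \emph{measurability} of $\lambda$ is what lets \autoref{lemma: extending uniform ultrafilters} transport ultrafilters from $V$ into $M$, whereas the full \emph{$\lambda^{+\omega}$-strong compactness} is exactly what is needed to manufacture, already in $V$, the $\lambda$-complete uniform ultrafilters on the cardinals $\lambda^{+(n-1)}$: mere measurability of $\lambda$ yields no $\lambda$-complete uniform ultrafilter on $\lambda^{+}$, so it would not suffice for $n\geq2$. One must also be sure that the collapse genuinely sends $\lambda^{+\omega}$ to $\aleph_\omega^M$ and preserves its cofinality $\aleph_0$, since that is what licenses the appeal to \autoref{thm:cofinality-reflection}.
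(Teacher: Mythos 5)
Your proof is correct and follows essentially the same route as the paper: \autoref{thm:symm-coll-and-uf} plus \autoref{thm:cofinality-reflection} for the failure at $\aleph_\omega$, and $\lambda^{+\omega}$-strong compactness combined with \autoref{lemma: extending uniform ultrafilters} and \autoref{thm:folk-measure-preserving}(4) for the uniform ultrafilters on the $\aleph_n$. The extra care you take in verifying the cardinal arithmetic of the collapse and the $\lambda$-completeness of the co-bounded filters is sound and merely fills in details the paper leaves implicit.
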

\begin{proof}
By \autoref{thm:symm-coll-and-uf} there are no uniform ultrafilters on $\aleph_0$,\footnote{Note that in the case of an ultrafilter on $\aleph_0$, uniform is equivalent to free.} and so by \autoref{thm:cofinality-reflection}, there is no uniform ultrafilter on $\aleph_\omega$. However by strong compactness, $\lambda^{+n}$ carries a $\lambda$-complete uniform ultrafilter, and by \autoref{lemma: extending uniform ultrafilters} and \autoref{thm:folk-measure-preserving} these extend in the symmetric extension.
\end{proof}
\begin{remark}
If $\lambda$ happens to be fully strongly compact, the resulting model is such that every successor cardinal is regular and $\mu$ carries a uniform ultrafilter if and only if $\cf(\mu)$ is uncountable, by the theorems and lemmas proved so far.
\end{remark}
\begin{theorem}\label{thm:integration}
Let $\lambda$ be singular and let $A\subseteq\lambda$ be an unbounded set. Assume that $\tup{U_\kappa\mid\kappa\in A}$ is a sequence such that $U_\kappa$ is a uniform ultrafilter on $\kappa$. If $U$ is a uniform ultrafilter on $\otp(A)$, then there is a uniform ultrafilter on $\lambda$.
\end{theorem}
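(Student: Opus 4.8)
The plan is to build the uniform ultrafilter on $\lambda$ as the ``$U$-integral'' of the family $\langle U_\kappa\mid\kappa\in A\rangle$. Let $\langle\kappa_i\mid i<\otp(A)\rangle$ enumerate $A$ in increasing order. Since $A$ is unbounded in $\lambda$ we have $\lambda=\bigcup_{i<\otp(A)}\kappa_i$, so for every $X\subseteq\lambda$ and every $i$ the set $X\cap\kappa_i$ is a subset of $\kappa_i$, on which $U_{\kappa_i}$ is defined. I would then set
\[
W=\{\, X\subseteq\lambda \mid \{i<\otp(A): X\cap\kappa_i\in U_{\kappa_i}\}\in U \,\},
\]
and claim that $W$ is the desired uniform ultrafilter on $\lambda$.

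First I would check that $W$ is an ultrafilter, each closure property reducing to the corresponding property of $U$ together with those of the $U_{\kappa_i}$. We have $\lambda\in W$ and $\varnothing\notin W$ because $\kappa_i\in U_{\kappa_i}$ and $\varnothing\notin U_{\kappa_i}$ for every $i$; closure under finite intersections and under supersets is immediate from $(X\cap Y)\cap\kappa_i=(X\cap\kappa_i)\cap(Y\cap\kappa_i)$ and from the monotonicity of $X\mapsto X\cap\kappa_i$; and for maximality, for each fixed $i$ exactly one of $X\cap\kappa_i$ and $(\lambda\setminus X)\cap\kappa_i=\kappa_i\setminus(X\cap\kappa_i)$ lies in the ultrafilter $U_{\kappa_i}$, so the index sets attached to $X$ and to $\lambda\setminus X$ are complementary in $\otp(A)$, whence exactly one of them is in $U$.

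The heart of the matter is uniformity. Suppose toward a contradiction that $X\in W$ but $|X|=\mu<\lambda$, and put $S=\{i<\otp(A): X\cap\kappa_i\in U_{\kappa_i}\}\in U$. For each $i\in S$, uniformity of $U_{\kappa_i}$ forces $|X\cap\kappa_i|=\kappa_i$, so $\kappa_i\le\mu$; hence $S$ is contained in $\{i:\kappa_i\le\mu\}$, which is a bounded — indeed a proper initial — segment of $\otp(A)$, since $A$ is unbounded in $\lambda$ and $\mu<\lambda$. Thus $U$ concentrates on a bounded subset of $\otp(A)$, contradicting the uniformity of $U$.

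The step I expect to be the main obstacle is precisely this last one: deriving a contradiction from the fact that $U$ lives on a bounded subset of $\otp(A)$. When $\otp(A)$ is a cardinal this is immediate — a bounded subset then has strictly smaller cardinality and so cannot belong to a uniform ultrafilter — and this is exactly the situation in the intended applications (for instance $A=\{\aleph_n\mid 0<n<\omega\}$, where $\otp(A)=\omega$). For a general order type one must work a little harder; I would try to reduce to the previous case by replacing $A$ with a cofinal subset of order type $\cf(\lambda)$, noting that the hypothesis on the $U_\kappa$ is inherited, and replacing $U$ by a uniform ultrafilter on $\cf(\lambda)$ distilled from it — directly if $\otp(A)=\cf(\lambda)$, and otherwise through a cofinality-reduction in the spirit of \autoref{thm:cofinality-reflection} — after which the construction above applies with $\otp(A)$ a regular cardinal and the uniformity argument goes through verbatim. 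Carrying out this reduction cleanly in full generality is the delicate point.
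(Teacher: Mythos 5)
Your construction is exactly the paper's: the proof in the paper consists solely of the displayed definition of the integrated ultrafilter $X\in U^*\iff\{\alpha\mid X\cap\kappa_\alpha\in U_\alpha\}\in U$, and your verification of the filter and maximality properties is the intended (and correct) routine check.

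The one point where you go beyond the paper is also the one point where you should be careful. Your worry about $\otp(A)$ not being a cardinal is legitimate: uniformity of $U^*$ reduces to the claim that every $U$-large subset of $\otp(A)$ is cofinal in $\otp(A)$, which follows from uniformity of $U$ precisely when bounded subsets of $\otp(A)$ have cardinality strictly below $|\otp(A)|$ --- automatic if $\otp(A)$ is a cardinal, false in general (e.g.\ a uniform ultrafilter on $\omega_1+\omega$ necessarily concentrates on the initial copy of $\omega_1$, and the resulting $U^*$ then lives on a bounded subset of $\lambda$). The paper does not address this; in all of its applications $\otp(A)=\cf(\lambda)$ is a regular cardinal, so the issue never arises. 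However, your proposed repair --- pushing $U$ down to a uniform ultrafilter on $\cf(\lambda)$ ``in the spirit of \autoref{thm:cofinality-reflection}'' --- runs into the identical obstruction: the pushforward of a uniform ultrafilter on a non-cardinal ordinal $\delta$ along an interval partition indexed by $\cf(\delta)$ need not be uniform, for exactly the reason you already identified (a bounded subset of $\delta$ can have cardinality $|\delta|$). In the $\omega_1+\omega$ example above one would need to extract a free ultrafilter on $\omega$ from a uniform ultrafilter on $\omega_1$, which is not a $\ZF$ theorem. So you should not present the reduction as completing the general case; the clean statement is either to assume $\otp(A)$ is a (regular) cardinal, or to add the hypothesis that every set in $U$ is cofinal in $\otp(A)$, both of which hold in every use of this theorem in the paper.
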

\begin{proof}
Define $U^*$ to be the integration of $\tup{U_\kappa\mid\kappa\in A}$ with respect to $U$,\[X\in U^*\iff\{\alpha\mid X\cap\kappa_\alpha\in U_\alpha\}\in U.\qedhere\]
\end{proof}
It is interesting to note that without choice, it is not reasonable to assume that just because there is a uniform ultrafilter on each $\kappa$ in $A$, there is also a choice sequence of uniform ultrafilters. The next theorem, however, shows that the existence of a uniform ultrafilter on $\lambda$ need not imply the existence of uniform ultrafilters on smaller cardinals other than $\cf(\lambda)$. Combining the previous theorem with the next one also leads us quite naturally to \autoref{question:non-closure}.
\begin{theorem}
Assuming the existence of countably many measurable cardinals, it is consistent that there is a uniform ultrafilter on $\aleph_\omega$ but for all $0<n<\omega$, there are no uniform ultrafilters on $\aleph_n$.
\end{theorem}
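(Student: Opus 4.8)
The plan is to build the model as a length-$\omega$ iteration, over $V$, of symmetric collapses, using the $\omega$ measurable cardinals both to supply the raw material for a uniform ultrafilter on $\aleph_\omega$ and to shield that ultrafilter as the iteration proceeds. First I would pass to a model of $\GCH$ (forcing $\GCH$ preserves measurability), fix measurable cardinals $\kappa_0<\kappa_1<\cdots$ with normal measures $U_n$ on $\kappa_n$, set $\kappa_\omega=\sup_n\kappa_n$ and $\kappa_{-1}:=\aleph_1$, and fix a free ultrafilter $F$ on $\omega$ in $V$. Then I would run an iteration $\langle M_n\mid n\le\omega\rangle$ of symmetric extensions in which $M_n$ arises from $M_{n-1}$ by the symmetric collapse $\Col(\kappa_{n-1},<\kappa_n)$ of Section~4, taking finite supports (direct limits) at stage $\omega$, so that every set in $M_\omega$ already appears in some $M_n$. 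Here I would invoke the machinery of iterated symmetric extensions to make sense of the limit and of the relativized forcing relation $\forces^\HS$ along the iteration.

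The cardinal bookkeeping is routine: $\Col(\kappa_{n-1},<\kappa_n)$ is $\kappa_{n-1}$-closed and (since $\kappa_n$ is inaccessible) $\kappa_n$-c.c., so stage $n$ collapses exactly the interval $(\kappa_{n-1},\kappa_n)$ onto $\kappa_{n-1}$, fixes everything $\le\kappa_{n-1}$, and preserves everything $\ge\kappa_n$. Hence in $M_\omega$ one has $\aleph_1=\aleph_1^V$, $\aleph_{n+2}=\kappa_n$ for all $n$, and therefore $\aleph_\omega=\kappa_\omega$; every successor cardinal stays regular; and no reals are ever added, so the ground-model $F$ still witnesses that $\aleph_0$ is measurable in $M_\omega$ — which is exactly what \autoref{thm:cofinality-reflection} demands of any model with a uniform ultrafilter on $\aleph_\omega$ (and is why the iteration must start above $\aleph_0$). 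The failure of uniform ultrafilters on the $\aleph_n$'s, $0<n<\omega$, comes from \autoref{thm:symm-coll-and-uf}: for $n=1$ apply it at stage $0$, the collapse $\Col(\aleph_1^V,<\kappa_0)$; for $n=m+2$ apply it at stage $m+1$, the collapse $\Col(\kappa_m,<\kappa_{m+1})$ with base $\kappa_m$. One then checks that ``there is no uniform ultrafilter on $\kappa_m$'' is preserved by the remaining stages, which are $\kappa_{m+1}$-closed and hence, under $\GCH$, add no new subsets of $\power(\kappa_m)$ — so no new ultrafilters on $\kappa_m$ at all — the iterated-symmetric framework being what lets one phrase this preservation along the tail of the iteration.

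The core of the argument is the uniform ultrafilter on $\aleph_\omega=\kappa_\omega$. At stage $n$ the measures $U_m$ with $m\ge n$ are still ``alive'': the iteration $\Col(\kappa_{-1},<\kappa_0)\ast\cdots\ast\Col(\kappa_{n-1},<\kappa_n)$ has all symmetric subsets of size $<\kappa_n\le\kappa_m$ (by clause~(4) of \autoref{thm:folk-measure-preserving}), so by \autoref{lemma: extending uniform ultrafilters} and its iterated analogue each $U_m$ extends uniquely to a $\kappa_m$-complete uniform ultrafilter $\wt U_m^{(n)}$ on $\kappa_m$ in $M_n$, and these extensions cohere as $n$ increases up to $n=m$ (stage $m+1$ then destroys $\wt U_m$, consistently with $\kappa_m$ becoming $\aleph_{m+2}$). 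In $M_n$ the tail $\langle\wt U_m^{(n)}\mid m\ge n\rangle$ is definable, so integrating it over $F\restriction[n,\omega)$ as in \autoref{thm:integration} yields a uniform ultrafilter $\wt W_n$ on $\kappa_\omega$ in $M_n$; because the $\wt U_m^{(n)}$ cohere and $F$-largeness ignores finite differences, the $\wt W_n$ cohere, and their common refinement $\wt W$ should be a uniform ultrafilter on $\kappa_\omega$ in $M_\omega$.

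The hard part — and where the iterated-symmetric bookkeeping does real work — is verifying that $\wt W$ actually lies in $M_\omega$, i.e.\ has a hereditarily symmetric name in the iteration: $M_\omega$ must ``remember'' enough of the destroyed measures $\wt U_m^{(m)}$ to decide $\wt W$ on every subset of $\kappa_\omega$ it contains, yet not so much as to resurrect a uniform ultrafilter on any $\kappa_m=\aleph_{m+2}$. Threading this needle uses that every set of ordinals in $M_\omega$ enters at a finite stage and there depends only on boundedly-much-below-$\kappa_\omega$ information — so its trace on each $\kappa_m$ was present, and hence was decided by $\wt U_m^{(m)}$, by some finite stage — together with the fact that the symmetry at later stages acts only on coordinates irrelevant to any fixed $\kappa_m$. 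I expect the bulk of the write-up to go into this last verification; the remaining facts are either cardinal arithmetic or direct citations of the results in Sections~3 and~4. Finally, one observes that $\aleph_\omega$ carrying a uniform ultrafilter while no $\aleph_n$ ($0<n<\omega$) does is precisely the configuration that, read against \autoref{thm:integration} and \autoref{prop:first-measurable-is-measurable}, motivates \autoref{question:non-closure}.
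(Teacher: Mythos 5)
Your overall strategy is the paper's: symmetrically collapse the intervals between consecutive $\kappa_n$'s with finite support, start the collapses at $\aleph_1$ so that $\aleph_0$ keeps a free ultrafilter (as \autoref{thm:cofinality-reflection} forces it to), kill uniform ultrafilters on each $\aleph_n$ via \autoref{thm:symm-coll-and-uf}, and assemble a hereditarily symmetric name for the integrated ultrafilter on $\kappa_\omega$ out of the canonical finite-stage extensions of the tail measures --- this last part is exactly the paper's name $\dot U_*$, and your sketch of it is fine modulo the writing-out you defer. The one substantive divergence is that the paper takes a finite support \emph{product} of the symmetric collapses over $V$, whereas you take an \emph{iteration}, and this is precisely what undermines your argument that no uniform ultrafilter on $\kappa_m=\aleph_{m+2}$ survives to the final model.

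Concretely, your preservation step is wrong as stated. The tail beyond the stage $\Col(\kappa_m,<\kappa_{m+1})$ is $\kappa_{m+1}$-closed, so it adds no new subsets of $\kappa_m$; but it does not follow that it adds no new subsets of $\power(\kappa_m)$. After that collapse $2^{\kappa_m}=\kappa_{m+1}$ in the generic extension, and a $\kappa_{m+1}$-closed forcing can certainly add new subsets of a set of size $\kappa_{m+1}$ (e.g.\ $\Add(\kappa_{m+1},1)$ is $\kappa_{m+1}$-closed). An ultrafilter on $\kappa_m$ is a subset of $\power(\kappa_m)$, i.e.\ a possibly new \emph{family} of old sets, so ``no new subsets of $\kappa_m$'' rules nothing out; worse, in the symmetric extension $\power(\kappa_m)$ need not be well-orderable, so you cannot even code a putative ultrafilter as a subset of $\kappa_{m+1}$ to run a closure argument. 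The paper avoids all of this by exploiting commutativity of the product: the system factors as $\cS'\times\Col(\kappa_m,<\kappa_{m+1})$, the final model is the symmetric extension of the model $M'$ given by $\cS'$ by that single collapse, and the homogeneity argument of \autoref{thm:symm-coll-and-uf} run over $M'$ kills uniform ultrafilters on $\kappa_m$ in the \emph{final} model directly (this is also how \autoref{thm:easton-like} handles the analogous step). With an iteration this rearrangement is not available without further work, so you should either switch to the product --- which also simplifies the limit-stage bookkeeping and the definability of the tail measures at each finite stage --- or supply a genuinely different preservation argument for the tail.
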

\begin{proof}
Without loss of generality, assume that $2^{\aleph_0}=\aleph_1$, otherwise we can force this while preserving the measurable cardinals. Let $\kappa_0=\aleph_1$ and let $\kappa_n$ for $n>0$ be a sequence of measurable cardinals, with $\kappa_\omega=\sup\{\kappa_n\mid n<\omega\}$. Let $U_0$ be a uniform ultrafilter on $\aleph_0$ and $U_n$ a fixed normal measure on $\kappa_n$ for $n>0$. The ultrafilter $U$ obtained by integrating the $U_n$'s over $U_0$ is a uniform ultrafilter on $\kappa_\omega$, i.e., \[A\in U\iff\{n<\omega\mid A\cap\kappa_{n+1}\in U_{n+1}\}\in U_0.\]
Consider the symmetric extension obtained by taking the finite support product of the symmetric collapses $\Col(\kappa_n,<\kappa_{n+1})$. In the resulting model $M$ the following hold:
\begin{enumerate}
\item If $A$ is a set of ordinals, then there is some $n$ such that $A$ was introduced by collapses below $\kappa_n$.
\item $2^{\aleph_0}=\aleph_1$, and moreover no new reals are added. In particular $U_0$ is still an ultrafilter on $\aleph_0$.
\item For $n>0$, $\aleph_n=\kappa_{n-1}$, and there are no uniform ultrafilters on $\aleph_n$.
\item $U$ extends to an ultrafilter.
\end{enumerate}
To see that (1) is true, note that if $\dot A$ is a name for a set of ordinals, then $\dot A^*$ defined as $\{\tup{p,\check\xi}\mid p\forces\check\xi\in\dot A\}$ is such that $\dot A^*\in\HS$ and $1\forces\dot A=\dot A^*$ (it follows from the Symmetry Lemma that $\dot A$ and $\dot A^*$ are fixed by the same automorphisms). We will say that $\dot A$ is a \textit{nice name} if $\dot A=\dot A^*$, and we can always assume therefore names for sets of ordinals are nice. By the strong homogeneity of the system we obtain that if $n$ is such that $\sym(\dot A)$ contains all the permutations which are the identity on $\prod_{m<n}\Col(\kappa_m,<\kappa_{m+1})$, then every condition which appears in $\dot A$ can be restricted to the product of the first $n$ symmetric collapses. This readily implies (1) and also gives (2), since no finitely many collapses add any subsets of $\omega$. It also follows that for $n>0$, $M\models\aleph_n=\kappa_{n-1}$: note that this is the case in the full generic extension, and that the collapsing functions of any ordinal in the intervals $(\kappa_n,\kappa_{n+1})$ have a symmetric name. Of course, this means that in $M$, $\kappa_\omega=\aleph_\omega$.

The fact that there are no uniform ultrafilters on $\aleph_n$ for $n>0$ follows from \autoref{thm:symm-coll-and-uf}. Finally, $U$ extends to an ultrafilter since given any $n$, looking at the intermediate model obtained by the product of the first $n$ systems, $U$ can be extended to an ultrafilter there. This is because $U_0$ and a tail of the measures $U_n$ are preserved by \autoref{lemma: extending uniform ultrafilters} and \autoref{thm:folk-measure-preserving}. Therefore in order to decide whether or not $A\subseteq\kappa_\omega$ is in $U$, we only need to ask whether or not it entered the extension of $U$ in some bounded part of the product.

Formally, for all $n$, let $\dot U_{*(n)}$ denote the canonical name for the unique extension of $U$ in the model obtained by the symmetric product $\prod_{m<n}\Col(\kappa_m,<\kappa_{m+1})$. This name is obtained by looking at the canonical extension of each $U_k$ for $k\geq n$, and their integration using $U_0$. Finally, define $\dot U_*$ as follows,
\[\dot U_*=\left\{\tup{p,\dot A}\middd\begin{array}{l}
\dot A\in\HS\text{ is a nice name for a subset of }\kappa_\omega\text,\\
\exists n\text{ such that:}\\
\fix(n)\leq\sym(\dot A),\text{ and }\\
p\restriction n\forces^\HS_n\dot A\in\dot U_{*(n)}.
\end{array}\right\}.\]
It is an exercise in verifying definitions to see that in fact all the automorphisms preserve the name $\dot U_*$. Therefore it is in fact in $\HS$, and moreover it follows that $1\forces^\HS\dot U_*\text{ is a uniform ultrafilter on }\check\kappa_\omega$.
\end{proof}
\subsection{Uniformity spectra on successor cardinals}
The rest of this section is devoted to proving the main theorem of interest.

\begin{theorem}\label{thm:easton-like}
Assume $\GCH$ and that there is a proper class of strongly compact cardinals. Let $F\colon\Ord\cup\{-1\}\to2$ be a class function such that $F(\alpha)=0$ for every infinite limit ordinal $\alpha$. Then there is a class symmetric extension $M_F$ satisfying:
\begin{enumerate}
\item Every successor cardinal is regular and successors of singular cardinals in $M_F$ are computed the same as in the ground model.
\item There exists a uniform ultrafilter on $\aleph_{\alpha+1}$ if and only if $F(\alpha)=1$.
\item If $\lambda$ is a singular cardinal such that \[\sup(\{\mu^+ < \lambda \mid \mu^+\in \cU\})=\lambda\] and $\cf(\lambda)\in\cU$, then there is a uniform ultrafilter on $\lambda$.
\end{enumerate} 
\end{theorem}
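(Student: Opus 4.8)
The plan is to realise $M_F$ as an Easton-support product of symmetric systems of the two kinds already in hand — generalized Feferman systems (\autoref{thm:g-feferman}) and symmetric collapses (\autoref{thm:folk-measure-preserving}, \autoref{thm:symm-coll-and-uf}) — placed at carefully chosen ground-model cardinals so as to pin down the $\aleph$-sequence of $M_F$ and to install, at each $\aleph_{\alpha+1}^{M_F}$, either a component that protects a uniform ultrafilter or one that destroys all of them. The observation that makes this feasible from only a proper class of strongly compacts is that, since $F$ vanishes on limit ordinals, every maximal interval on which $F\equiv1$ consists of successor ordinals lying strictly between a limit ordinal (or $-1$) and the next limit ordinal, and so has order type at most $\omega$.

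On an $F\equiv1$ block $[\beta,\gamma)$ I would take a fresh strongly compact $\kappa$ above everything used so far, declare $\aleph_{\beta+1+n}^{M_F}=\kappa^{+n}$ for the $n$ occurring in the block, and force only at the bottom of the block, with the symmetric collapse $\Col(\aleph_\beta^{M_F},<\kappa)$; by \autoref{thm:folk-measure-preserving} this makes $\kappa=(\aleph_\beta^{M_F})^+$ and keeps $\kappa$ measurable. Because $\kappa$ is strongly compact in $V$, each $\kappa^{+n}$ carries a $\kappa$-complete uniform ultrafilter there; since everything below $\aleph_\beta^{M_F}$ has size $<\kappa$ and the collapse itself has all symmetric subsets of size $<\kappa$ by \autoref{thm:folk-measure-preserving}(4), \autoref{lemma: extending uniform ultrafilters} promotes each of these to a $\kappa$-complete uniform ultrafilter on $\kappa^{+n}$ in the corresponding symmetric extension. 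This is where the bound on the order type is used: every component placed later sits at a cardinal exceeding all the $\kappa^{+n}$ of the block, hence is closed enough to add no subset of any of them, and these ultrafilters survive into $M_F$. On an $F\equiv0$ stretch I would instead let the $\aleph$-sequence run through the ground-model successors and put a generalized Feferman component $\Add(\mu,\mu)$ at every regular uncountable $\mu=\aleph_{\alpha+1}^{M_F}$ in the stretch (\autoref{rem:collapse-and-feferman}); by \autoref{thm:g-feferman} there is then no uniform ultrafilter on $\mu$ in its extension, and, again, no later component adds a subset of $\mu$, so none reappears in $M_F$. A collapse $\Col(\aleph_{\alpha+1}^{M_F},<\kappa')$ opening the next $F\equiv1$ block does double duty, as \autoref{thm:symm-coll-and-uf} shows it already kills the uniform ultrafilters at $\aleph_{\alpha+1}^{M_F}$. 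Finally, the very bottom of the product is tailored to $(F(-1),F(0))$: a Feferman system at $\omega$ if $F(-1)=0$; no collapse below $\aleph_1$ if $F(-1)=1$, so that a ground-model free ultrafilter on $\omega$ survives; and, if $F(-1)=F(0)=1$, an additional sufficiently distributive real-preserving symmetric forcing adjoining a uniform ultrafilter on $\aleph_1^{M_F}$.

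Granting the construction, (2) is exactly this dichotomy: $\aleph_{\alpha+1}^{M_F}\in\cU$ when $F(\alpha)=1$ since it is one of the protected $\kappa^{+n}$, and $\aleph_{\alpha+1}^{M_F}\notin\cU$ when $F(\alpha)=0$ since a symmetric collapse $\Col(\aleph_{\alpha+1}^{M_F},<\nu)$ sits at it and \autoref{thm:symm-coll-and-uf} applies, the closure of the tail ensuring the property persists into $M_F$. For (1), each $\aleph_{\alpha+1}^{M_F}$ is a strongly compact of $V$, a $V$-successor, or made a successor by a symmetric collapse (\autoref{thm:folk-measure-preserving}(2)), hence regular, and the singular cardinals of $M_F$, being limits of its $\aleph$-sequence, have the ground-model successors because no component collapses them or inserts cardinals below them — here one also needs $M_F\models\ZF$ and that the symmetric product preserves the relevant cofinalities, which follows from the machinery of \cite{Karagila:2014} (cf.\ the remark after \autoref{thm:g-feferman}). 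For (3), given such a $\lambda$ let $A=\{\mu^+<\lambda\mid\mu^+\in\cU\}$; this is cofinal in $\lambda$, and, using a ground-model cofinal map from $\cf(\lambda)$, it can be thinned to a set of order type $\cf(\lambda)$. On each $\mu^+\in A$ the construction has produced a canonical uniform ultrafilter, namely the unique $\kappa$-complete extension given by \autoref{lemma: extending uniform ultrafilters}, and this assignment is uniformly definable from the ground-model parameters, which lie in $M_F$; hence the sequence $\tup{U_{\mu^+}\mid\mu^+\in A}$ exists in $M_F$, and integrating it against a uniform ultrafilter on $\cf(\lambda)$ — available since $\cf(\lambda)\in\cU$ — via \autoref{thm:integration} yields a uniform ultrafilter on $\lambda$.

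The step I expect to dominate the work is turning this into a bona fide \emph{class} symmetric extension: the supports must be chosen so that each proper initial segment is a set forcing lying below the next strongly compact in use (so that \autoref{lemma: extending uniform ultrafilters} and a L\'evy--Solovay-style preservation of strong compactness both apply to it), the full product must still yield a transitive model of $\ZF$ containing $V$ with the expected hereditarily symmetric names, and — most delicately — cofinalities, in particular the regularity of \emph{all} successor cardinals, must be shown to survive the symmetric product and not merely the generic one. A secondary fine point is precisely the base case: arranging $\aleph_0$ and $\aleph_1$ to lie in $\cU$ simultaneously when $F(-1)=F(0)=1$ requires the ad hoc bottom component mentioned above.
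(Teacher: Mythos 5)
Your construction is essentially the paper's: an Easton-support product of symmetric systems in which each maximal $F\equiv1$ block is realised as $\kappa,\kappa^{+},\kappa^{++},\dots$ for a fresh strongly compact $\kappa$ collapsed by $\Col(\aleph_\beta,<\kappa)$ to become the successor of the preceding cardinal (so that \autoref{lemma: extending uniform ultrafilters} preserves the $\kappa$-complete uniform ultrafilters on the $\kappa^{+n}$ while \autoref{thm:symm-coll-and-uf} kills them at the base of the collapse), with $\Add(\mu,\mu^+)$-style Feferman components on the $F\equiv0$ stretches, closure of the tail doing the preservation work, and part (3) obtained by choosing the ultrafilter sequence in $V$ and integrating the unique symmetric extensions via \autoref{thm:integration}. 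The one divergence is your ad hoc bottom component for $F(-1)=F(0)=1$, which is unnecessary: under the paper's convention $\aleph_0$ counts as strongly compact, so in that case one simply places no nontrivial component below $\aleph_2$ and the ground-model uniform ultrafilters on $\aleph_0$ and $\aleph_1$ survive outright.
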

\begin{proof}
We define by recursion the following continuous sequence of cardinals $\kappa_\alpha$, such that in $M_F$, $\aleph_\alpha=\kappa_\alpha$:
\begin{enumerate}
\item $\kappa_0=\aleph_0$.
\item For a limit $\alpha$, $\kappa_\alpha=\sup\{\kappa_\beta\mid\beta<\alpha\}$, and $\kappa_{\alpha+1}=\kappa_\alpha^+$.
\item If $\alpha=\beta+2$ with $F(\beta+1)=1$ and $F(\beta)=0$, then $\kappa_\alpha$ is the least strongly compact cardinal larger than $\kappa_{\beta+1}$.
\item If $\alpha=\beta+2$ and the previous case does not hold, then $\kappa_\alpha=\kappa_{\beta+1}^+$.
\end{enumerate}

For every $\alpha$, $\tup{\QQ_\alpha,\sG_\alpha,\sF_\alpha}$ is the symmetric collapse $\Col(\kappa_\alpha,<\kappa_{\alpha+1})$ when $\alpha$ is a successor ordinal (or $0$) and $F(\alpha-1)=0$, and the trivial symmetric system otherwise. Note that in the case where $\kappa_{\alpha+1}=\kappa_\alpha^+$, the forcing is just $\Add(\kappa_\alpha,\kappa_\alpha^+)$. So if, for example, $F(\alpha)=0$ for all $\alpha$, the symmetric construction is the mild variant of the generalized Feferman construction mentioned in \autoref{rem:collapse-and-feferman} (in particular no cardinals are collapsed in that case as we assume $\GCH$).

We now take the Easton support product of the symmetric systems as described in \cite{Karagila:2014}, and let $\cS$ be the product. Let $M_F$ be the symmetric extension by the symmetric system $\cS$. Combining the theorems from the previous sections, and the fact that each of the forcings is sufficiently closed, we sketch the proof that $M_F$ satisfies (1)--(3).

Working in $M_F$ we show that (1) holds, by proving inductively that $\aleph_\alpha=\kappa_\alpha$. By definition, $\aleph_0=\kappa_0$. The limit case is trivial as is the successor of limit case. Suppose that $\aleph_\alpha=\kappa_\alpha$, then either the $\alpha$th symmetric system was trivial and $\kappa_{\alpha+1}=(\kappa_\alpha^+)^V=(\kappa_\alpha^+)^{M_F}$, or $F(\alpha)=1$. In this case, $\kappa_{\alpha+1}$ is a strongly compact cardinal, and we force with $\Col(\kappa_\alpha,<\kappa_{\alpha+1})$. Therefore $M_F\models\kappa_\alpha^+=\aleph_{\alpha+1}=\kappa_{\alpha+1}$. Since $\kappa_{\alpha+1}$ is regular in the full generic extension and regularity of a cardinal is downwards absolute, $\kappa_{\alpha+1}$ is regular in the symmetric extension $M_F$ as well. 

Next we show that (2) holds in $M_F$. Say that $\alpha$ is a \textit{flip point} if it is a successor ordinal or zero such that $F(\alpha)=1$ and $F(\alpha-1)=0$. Note that for $\alpha \geq 0$, being a flip point coincides with $\kappa_{\alpha+1}$ being a strongly compact cardinal in the ground model. Moreover, if $\alpha > \omega$, $F(\alpha)=1$ and $\alpha$ is not a flip point, then there is a flip point $\beta$ such that $\alpha=\beta+n$ for some $n<\omega$.

First, using Easton's lemma, in the full generic extension, any subset of $\kappa_\alpha$ is added by the first $\alpha+1$ steps of the product. Thus, $\ZFC$ holds in the generic extension and $\ZF$ holds in the symmetric extension. 

Let us verify that in the symmetric extension there is a uniform ultrafilter on $\aleph_{\alpha + 1}$ if and only if $F(\alpha) = 1$. Let us deal first with the case $F(\alpha) = 0$. In this case, the system $\cS$ splits into a product of $\cS'$ and $\cS_{\alpha+1} = \tup{\QQ_{\alpha+1},\sG_{\alpha+1},\sF_{\alpha+1}}$. Let $M'$ be the symmetric extension by $\cS'$. By extending $M'$ using the system $\cS_{\alpha+1}$ we clearly obtain the model $M_F$. Thus, by applying \autoref{thm:symm-coll-and-uf} and \autoref{rem:collapse-and-feferman} in $M'$, we conclude that there is no uniform ultrafilter over $\aleph_{\alpha+1}$ in $M_F$. 

Let us deal now with the case $F(\alpha) = 1$. For every ordinal $\gamma$, the symmetric system $\cS$ splits into two parts: the part below $\gamma$, which we denote by $\cS \restriction\gamma$, and the rest of the forcing, which we denote by $\cS\restriction [\gamma, \infty)$. Let $\beta\in\{-1\}\cup\alpha$ be maximal such that $F(\beta)=0$, if there is one, or $\beta=-2$ otherwise.\footnote{Note that $\alpha-\beta$ is finite since for every infinite limit ordinal $\delta$, $F(\delta) = 0$.} Since $\beta+1\leq\alpha$ is either a flip point or $-1$, $\kappa_{\beta+2}$ is strongly compact. Therefore in the ground model, for each regular cardinal $\mu\geq\kappa_{\beta+2}$, there is a uniform $\kappa_{\beta+2}$-complete ultrafilter on $\mu$, $U$. By \autoref{lemma: extending uniform ultrafilters}, $U$ uniquely extends to a uniform ultrafilter in the symmetric extension by $\cS\restriction\beta+2$. Let $\widetilde{U}$ be the unique extension.

For all $\beta<\gamma\leq\alpha$, $F(\gamma)=1$. Therefore $\QQ_{\gamma+1}$ is trivial for all such $\gamma$. In other words, $\cS\restriction\beta+2$ is equivalent to $\cS\restriction\alpha+2$, so $\widetilde{U}$ is still a uniform ultrafilter in the symmetric extension by $\cS\restriction\alpha+2$. The remainder, $\cS\restriction[\alpha+2,\infty)$, does not add any new subsets of $\kappa_{\alpha+1}$, so $\widetilde{U}$ is a uniform ultrafilter on $\aleph_{\alpha+1}$ in $M_F$ as well.

Finally, we show that (3) holds. Suppose that $\lambda$ is a singular cardinal such that there exists an increasing sequence $\tup{\lambda_\alpha\mid\alpha<\cf(\lambda)}$ of successor cardinals whose supremum is $\lambda$, such that $\cf(\lambda),\lambda_\alpha\in\cU$ for all $\alpha<\cf(\lambda)$. Since $\lambda_\alpha$ is a successor cardinal, it is equal to some $\kappa_{\beta_\alpha+1}$. In the ground model, using the axiom of choice, let $U_\alpha$ be a uniform ultrafilter on $\kappa_{\beta_\alpha+1}$ whose closure is some $\kappa\leq\kappa_{\beta_\alpha+1}$ which is a flip point, or any uniform ultrafilters if there are no flip points less or equal than $\kappa_{\beta_\alpha+1}$. By the above, each $U_\alpha$ extends uniquely to some $\widetilde{U}_\alpha$ in $M_F$, and by the assumption that $\cf(\lambda)\in\cU$, we can apply \autoref{thm:integration} to obtain a uniform ultrafilter on $\lambda$ in $M_F$, as wanted.
\end{proof}

\section{Open problems}
In this paper we only dealt with the situation where every successor cardinal is regular. The following questions are left open in the same context.
\begin{question}
Is it consistent for $\aleph_{\omega+1}$ to be the least element of $\cU$? More generally, what behavior is consistent at successors of singular cardinals?
\end{question}
\begin{question}\label{question:non-closure}
Is it consistent for a singular cardinal, and specifically $\aleph_\omega$, to be the least cardinal \textit{not} in $\cU$? 
\end{question}
\begin{question}\label{question:referee}
Assume there is a uniform ultrafilter on $\aleph_{\omega_\omega}$, does that imply there is a uniform ultrafilter on $\aleph_\omega$? Or more generally, if $\lambda>\cf(\lambda)$ carries a uniform ultrafilter, does that imply that any other singular cardinal with the same cofinality carries a uniform ultrafilter?
\end{question}
\begin{question}
What are the exact limitations on $\cU$ in $\ZF$? Is \autoref{thm:cofinality-reflection} the only provable limitation?
\end{question}

One might argue that in some cases a proper class of strongly compact cardinals is a bit excessive. That is indeed correct. If we only want that $\cU$ is an initial segment (or even empty as happens in Feferman's original construction), then clearly one needs no large cardinals at all as this is can be obtained by taking an Easton support product of the symmetric systems described in section~\ref{section:gen-fef}. Even some less trivial patterns can be obtained from weaker hypotheses, e.g.\ if we define the function $F\colon \{-1\}\cup \Ord \to \{0,1\}$ by $F(\alpha + 2n) = 0, F(\alpha+2n+1)=1$ for all limit ordinal $\alpha$ (including $\alpha = 0$) and every natural number $n$ and $F(-1)=0$, then a model satisfying $\aleph_{\alpha + 1} \in \cU \iff F(\alpha) = 1$ can be obtained using just a proper class of measurable cardinals using the same construction as \autoref{thm:easton-like}. One can also allow for longer blocks of cardinals to have uniform ultrafilters by first ensuring there are uniform ultrafilters on the cofinalities of the singulars in the block, and then simply take a gap before the next symmetric collapse. For example, starting with one strongly compact cardinal and symmetrically collapsing it to be $\aleph_4$ will ensure the block of cardinals $[\aleph_4,\aleph_{\omega_3})$ will all carry uniform ultrafilters.

It is very unclear, however, how low these constructions can go. Some natural questions arise from these considerations.
\begin{question}
Is it consistent that $\kappa$ does not carry a uniform ultrafilter, $\kappa^+$ does, but $\kappa^+$ is not measurable, and is it possible without using large cardinals? In particular, is it consistent that $\aleph_0$ is the only measurable cardinal, while $\aleph_1\notin\cU$ and $\aleph_2\in\cU$?
\end{question}
\begin{question}
What is the large cardinal strength of having $\kappa>\aleph_0$ the least element of $\cU$, with $\kappa^+$ having a $\kappa$-complete, $\kappa^+$-incomplete, uniform ultrafilter?
\end{question}

Of course, once we allow successor cardinals to be singular (e.g., models where $\AD$ holds) the techniques above can no longer produce such results and a different approach is needed. There are many natural questions to ask in these contexts, and some trivial solutions. For example, it was shown to be consistent that the least regular cardinal is the least measurable cardinal and that it can be $\aleph_{\omega+1}$ \cite{AptDimKoep:2014}; from the existence of such a model the first question above is answered positively.\footnote{If there are any free ultrafilters on $\aleph_0$, by using the original Feferman construction one kills these ultrafilters, and as it is a countable forcing it does not collapse cardinals or destroy measurability. The result would be the least cardinal carrying a uniform ultrafilter is also the least regular cardinal which is exactly $\aleph_{\omega+1}$. It should be added that it is unclear whether there are free ultrafilters on $\aleph_0$ in that model, and it seems unlikely that there are any.}
\bibliographystyle{amsplain}
\providecommand{\bysame}{\leavevmode\hbox to3em{\hrulefill}\thinspace}
\providecommand{\MR}{\relax\ifhmode\unskip\space\fi MR }
\providecommand{\MRhref}[2]{%
  \href{http://www.ams.org/mathscinet-getitem?mr=#1}{#2}
}
\providecommand{\href}[2]{#2}

\end{document}